\newtheorem{lemma}{Lemma}[section]
\newtheorem{corollary}[lemma]{Corollary}
\newtheorem{theorem}[lemma]{Theorem}
\newtheorem{proposition}[lemma]{Proposition}
\theoremstyle{definition}
\newtheorem{definition}[lemma]{Definition}
\DeclareMathOperator{\Mod}{Mod}
\DeclareMathOperator{\modd}{mod}
\DeclareMathOperator{\Hom}{Hom}
\DeclareMathOperator{\brick}{brick}
\DeclareMathOperator{\sbrick}{sbrick}
\DeclareMathOperator{\fin-sbrick}{f-sbrick}
\DeclareMathOperator{\torf}{torf}
\DeclareMathOperator{\wide}{wide}
\DeclareMathOperator{\f-wide}{f-wide}
\DeclareMathOperator{\ff-wide}{ff-wide}
\DeclareMathOperator{\fg-tors}{fg-tors}
\DeclareMathOperator{\tors}{tors}
\DeclareMathOperator{\Tor}{Tor}
\DeclareMathOperator{\Has}{Hasse}
\DeclareMathOperator{\j-irr}{j-irr}
\DeclareMathOperator{\m-irr}{m-irr}
\DeclareMathOperator{\CJR}{CJR}
\DeclareMathOperator{\gen}{gen}
\DeclareMathOperator{\filt}{filt}
\DeclareMathOperator{\ssim}{sim}
\DeclareMathOperator{\ME}{ME}
\DeclareMathOperator{\MCE}{MCE}
\DeclareMathOperator{\I}{I}
\DeclareMathOperator{\II}{II}
\DeclareMathOperator{\mbrickc}{mbrick_{c.c.}}
\newtheorem{question}[lemma]{Question}
\newtheorem*{question 0*}{Question}
\newtheorem*{conjecture 0*}{Conjecture}
\newtheorem*{theorem 0*}{Theorem}
\newtheorem*{theorem a*}{Theorem A}
\newtheorem*{theorem b*}{Theorem B}
\newtheorem*{theorem c*}{Theorem C}
\newtheorem*{theorem d*}{Theorem D}
\newtheorem*{corollary e*}{Corollary E}
\newcounter{diagram}
\numberwithin{diagram}{section}
\begin{document}
	
	\title{Semibricks and Brick-finite algebras}

	\author{Alireza Nasr-Isfahani}
	\address{Department of Pure Mathematics\\
		Faculty of Mathematics and Statistics\\
		University of Isfahan\\
		P.O. Box: 81746-73441, Isfahan, Iran}
	\email{nasr$_{-}$a@sci.ui.ac.ir, nasr@ipm.ir}

	\subjclass[2020]{{16G10}, {16S90}, {18E10}, {05E10}, {06A07}}
	
	\keywords{Brick, Semibrick, Torsion class, Wide subcategory, Kappa order}
\maketitle	
	{\centering\footnotesize Dedicated to Claus Michael Ringel on the occasion of his 80th birthday\par}	
	\begin{abstract}
Let $\Lambda$ be a finite-dimensional algebra. Using the kappa order on the lattice of torsion classes with canonical join representations, we obtain an equivalent condition for $\Lambda$ to be brick-finite. We show that $\Lambda$ is brick-finite if and only if every widely generated torsion class in $\modd \Lambda$ has finitely many covers with respect to the kappa order. Furthermore, we prove that every semibrick in $\modd \Lambda$ is a finite set if and only if every chain of wide subcategories of $\modd \Lambda$ is eventually constant. We also show that $\Lambda$ is brick-finite if and only if every chain of widely generated torsion classes of $\modd \Lambda$ is eventually constant. Finally, we show that $\Lambda$ is brick-finite if and only if every cofinally closed monobrick is a cofinal closure of some semibrick.

	\end{abstract}
	
	\maketitle


	\section{Introduction}
Let $\Lambda$ be a finite-dimensional $K$-algebra. A $\Lambda$-module $M$ is called \textit{brick} if its endomorphism ring is a division ring. Schur’s Lemma shows that this notion is a generalization of simple modules. Brick modules naturally appeared in the representation theory of finite-dimensional hereditary algebras (see \cite{ASS, R}). Recently, Enomoto in \cite{E1} showed that every simple object in a torsion-free class is a brick, and by using information on bricks, he classified torsion-free classes in the category of finitely generated $\Lambda$-modules. A pairwise Hom-orthogonal set of bricks is called \textit{semibrick}. Ringel in \cite{R} proved that there is a bijection between the semibricks $\mathcal{S}$ in $\modd \Lambda$ and the wide subcategories $\mathcal{W}$ of $\modd \Lambda$, where the subcategory $\mathcal{W}$ of $\modd \Lambda$ is called \textit{wide subcategory} if it is closed under kernels, cokernels, and extensions. This bijection sends a wide subcategory $\mathcal{W}$ of $\modd \Lambda$ to the semibrick $\mathcal{S}$ that is the set of the simple objects in $\mathcal{W}$.

Adachi, Iyama, and Reiten in \cite{AIR} introduced and investigated (support) $\tau$-tilting modules to provide a completion of the
class of tilting modules from the mutation point of view. A finite-dimensional $K$-algebra $\Lambda$ is called \textit{$\tau$-tilting finite} if
there are only finitely many isomorphism classes of basic $\tau$-tilting $\Lambda$-modules \cite{DIJ}. Demonet, Iyama, and Jasso in \cite{DIJ} proved that $\Lambda$ is $\tau$-tilting finite if and only if $\Lambda$ is \textit{brick-finite} (i.e., there are only finitely many isomorphism classes of bricks in $\modd \Lambda$). Brick-finite algebras are one of the active research directions in representation theory of algebras, and several authors studied this class of algebras \cite{AS, A, DIJ, DIRRH, E1}. Enomoto in \cite{E1} provides several equivalent conditions for brick-finite algebras. He proved that $\Lambda$ is brick-finite if and only if there are only finitely many semibricks in $\modd \Lambda$. A set $\mathcal{M}$ of bricks is called \textit{monobrick} if every non-zero map between bricks in $\mathcal{M}$ is an injection. Enomoto proved that $\Lambda$ is brick-finite if and only if every monobrick in $\modd \Lambda$ is a finite set if and only if every semibrick in $\modd \Lambda$ is a finite set, and the map $\mathrm{F}:\wide \Lambda\hookrightarrow \torf \Lambda$ is surjective, where $\wide \Lambda$ is the set of wide subcategories of $\modd \Lambda$, $\torf \Lambda$ is the set of torsion-free classes in $\modd \Lambda$ and for $\mathcal{W}\in \wide \Lambda$, $\mathrm{F}(\mathcal{W})$ is the smallest torsion-free class in $\modd \Lambda$ containing $\mathcal{W}$. He also posed the following conjecture.

\begin{conjecture 0*}(\cite[Conjecture 5.12]{E1})
Let $\Lambda$ be a finite-dimensional algebra. If every semibrick in $\modd \Lambda$ is a finite set, then $\modd \Lambda$ is brick-finite, that is, $\Lambda$ is $\tau$-tilting finite.
\end{conjecture 0*}

This conjecture remains open. Our primary goal in this paper is to investigate Enomoto's conjecture and brick-finite algebras, and to establish some new equivalent conditions for this class of algebras.

First, by using the kappa order on the lattice of torsion classes in $\modd \Lambda$ with canonical join representations (see Definitions \ref{D2}, \ref{D4} and \ref{D6}), we prove the following theorem, which will be useful throughout the rest of the paper.

Let $L$ be a poset and  $a, b\in L$. We say that $a$ \textit{covers} $b$ and denote it by
$a\gtrdot b$ if $a > b$ and there is no $x$ in $L$ satisfying $a > x > b$.
	
	\begin{theorem a*}(Theorem \ref{T3}) Let $\Lambda$ be a finite-dimensional algebra. Then $\Lambda$ is brick-finite if and only if the following conditions are satisfied.
\begin{itemize}
			\item[(I)] Any chain of wide subcategories of $\modd \Lambda$ becomes eventually constant.
\item[(II)] For any torsion class $\mathcal{T}\in(\tors \Lambda)_{\kappa}$ there are finitely many $\mathcal{T}_i\in(\tors \Lambda)_{\kappa}$ that $\mathcal{T}\lessdot_\kappa\mathcal{T}_i$.
\end{itemize}
	\end{theorem a*}

Note that we will show that condition II in the above theorem implies condition I. In fact, in Theorem D, we will show that $\Lambda$ is brick-finite if and only if it satisfies condition II.
	
Then we provide an equivalent condition for the existence of an infinite semibrick in $\modd \Lambda$. More precisely, we prove the following result.
	
		\begin{theorem b*}(Theorem \ref{T4}) There is an infinite semibrick in $\modd \Lambda$ if and only if there is an infinite chain $$W_1\subsetneq W_2\subsetneq W_3\subsetneq\cdots$$ of wide subcategories of $\modd \Lambda$.
	\end{theorem b*}
	
A torsion class $\mathcal{T}$ is called \textit{widely generated} if there is a wide subcategory $\mathcal{W}$ in $\modd \Lambda$ such that $\mathcal{T}$ is the smallest torsion class containing $\mathcal{W}$. By using the minimal (co-)extending modules defined and studied by Barnard, Carroll, and Zhu in \cite{BCZ}, we give a bijection between widely generated
torsion(-free) classes and semibricks in $\modd \Lambda$ (see Theorem \ref{T6}). Moreover, this bijection restricts to a bijection between finitely generated torsion(-free) classes and finite semibricks in $\modd \Lambda$ (see Corollary \ref{T11}). By using this result, we prove the following Theorem, which gives another equivalent condition for the existence of an infinite semibrick in $\modd \Lambda$.

		\begin{theorem c*}(Theorem \ref{T7}) There is an infinite semibrick in $\modd \Lambda$ if and only if there is a torsion class $\mathcal{T}$ and infinitely many torsion classes $\mathcal{T}_1, \mathcal{T}_2, \mathcal{T}_3, \ldots$ in $\modd \Lambda$ that for any $i\neq j$, $\mathcal{T}_i\neq \mathcal{T}_j$ and $\mathcal{T}_i$ covers $\mathcal{T}$ for each $i$.
	\end{theorem c*}

Now, by using these results, we can provide an equivalent condition for brick-finiteness.

\begin{theorem d*}(Theorem \ref{T8}) Let $\Lambda$ be a finite-dimensional $K$-algebra. Then the following are equivalent.
\begin{itemize}
\item[(1)] $\Lambda$ is brick-finite.
\item[(2)] For any torsion class $\mathcal{T}\in(\tors \Lambda)_{\kappa}$ there are finitely many $\mathcal{T}_i\in(\tors \Lambda)_{\kappa}$ that $\mathcal{T}\lessdot_\kappa\mathcal{T}_i$.
\item[(3)] There are finitely many $\mathcal{T}_i\in(\tors \Lambda)_{\kappa}$ that $0\lessdot_\kappa\mathcal{T}_i$.
\end{itemize}
	\end{theorem d*}

By \cite[Theorem 4.25]{E2}, the posets $(\tors \Lambda)_{\kappa}$ and $\wide \Lambda$ are isomorphic. Then, by the above theorem, if $\Lambda$ is brick-infinite, then $\wide \Lambda$ has an infinite antichain. However, whether it has an infinite chain is not clear. Theorem B shows that this is equivalent to Enomoto's conjecture. In contrast, the lattice $\tors \Lambda$ always has an infinite chain for brick-infinite algebras \cite[Proposition 3.9]{DIJ}, but whether it has an infinite antichain is not clear. Theorem C shows that this is also equivalent to Enomoto's conjecture.

By studying chains of widely generated torsion classes, we show that $\Lambda$ is brick-finite if and only if every chain widely generated torsion classes in $\modd \Lambda$ is eventually constant (see Theorem \ref{T12}). As a consequence, we provide the following equivalence condition for brick-finite algebras.

 	\begin{corollary e*}(Corollary \ref{T13}) Let $\Lambda$ be a finite-dimensional algebra. Then $\Lambda$ is brick-finite if and only if the following conditions are satisfied.
\begin{itemize}
\item[(1)] For any chain $\mathrm{T}(\mathcal{W}_1)\subseteq \mathrm{T}(\mathcal{W}_2)\subseteq \mathrm{T}(\mathcal{W}_3)\subseteq\cdots$ of widely generated torsion classes of $\modd \Lambda$, $\bigcup_{i=1}^{\infty}\mathrm{T}(\mathcal{W}_i)$ is a widely generated torsion class.
\item[(2)] Any semibrick in $\modd \Lambda$ is a finite set.
\end{itemize}
	\end{corollary e*}

From the above corollary, Enomoto's conjecture is equivalent to the claim that whenever every semibrick in $\modd \Lambda$ is a finite set, the union of any chain $\mathrm{T}(\mathcal{W}_1)\subseteq \mathrm{T}(\mathcal{W}_2)\subseteq \mathrm{T}(\mathcal{W}_3)\subseteq\cdots$ of widely generated torsion classes of $\modd \Lambda$ remains a widely generated torsion class.\\

Cofinally closed monobricks were introduced and investigated by Enomoto in \cite{E1}. Enomoto proved that there is a bijection between torsion-free classes and cofinally closed monobricks in $\modd \Lambda$. Finally, we give an injection from the set of semibricks
to the set of cofinally closed monobricks in $\modd \Lambda$ and show that $\Lambda$ is brick-finite if and only if this injection is a bijection if and only if every cofinally closed monobrick is a cofinal closure of some semibrick (for more details see Corollary \ref{C3} and the paragraph before it).

	
	\section{preliminaries}
	In this section, we recall the definitions and known results that we need throughout the paper.
	
Let $\Lambda$ be a finite-dimensional $K$-algebra and $\modd \Lambda$ be the category of finitely generated right $\Lambda$-modules. A subcategory $\mathcal{W}$ of $\modd \Lambda$ is called \textit{wide subcategory} if it is closed under extensions, kernels, and cokernels. We denote by $\wide \Lambda$ the set of wide subcategories of $\modd \Lambda$. Note that $\wide \Lambda$ forms a poset under inclusion.

A subcategory $\mathcal{T}$ of $\modd \Lambda$ is called \textit{torsion class} (resp. \textit{torsion-free class}) in $\modd \Lambda$ if it is closed under extensions and quotients (resp. extensions and subobjects) in $\modd \Lambda$. The set of torsion classes (resp. torsion-free classes) in $\modd \Lambda$ is denoted by $\tors \Lambda$ (resp. $\torf \Lambda$). $\tors \Lambda$ forms a poset under inclusion. $\tors \Lambda$ is a \textit{complete lattice}, that is, it has arbitrary joins and meets \cite{DIRRH}. This lattice had been the topic of many research \cite{AP, BCZ, BTZ, DIRRH, E2, E1}. Let $X$ be a collection of objects in $\modd \Lambda$, the smallest torsion class (resp. torsion-free class) in $\modd \Lambda$ containing $X$ is denoted by $\mathrm{T}(X)$ (resp.  $\mathrm{F}(X)$). There are the following standard dualities between $\tors \Lambda$ and $\torf \Lambda$. Given a torsion class $\mathcal{T}$, denote by $\mathcal{T}^\bot$ the set of modules
$M\in\modd \Lambda$ such that $\Hom_\Lambda(\mathcal{T} ,M) = 0$. The map $$(-)^{\bot} : \tors \Lambda\rightarrow \torf \Lambda$$ is a poset
anti-isomorphism with inverse given by
$$^\bot(-) : \torf \Lambda\rightarrow \tors \Lambda,$$ where $^\bot\mathcal{F}:=\{M\in\modd \Lambda|\Hom_\Lambda(M, \mathcal{F})=0\}.$

The characterization of functorially finite torsion classes using $\tau$-tilting theory is one of the first significant results in this direction. Let us recall the definition of a functorially finite subcategory.

	\begin{definition}\label{D1}
 	A full subcategory $\mathcal{B}$ of $\modd \Lambda$ is called {\em covariantly finite} in $\modd \Lambda$, if for every $M\in \modd \Lambda$ there exist an object $N\in\mathcal{B}$ and a morphism $f : M\rightarrow N$ such that, for all $N'\in\mathcal{B}$, the sequence of abelian groups $\Hom_\Lambda(N, N')\rightarrow \Hom_\Lambda(M, N')\rightarrow 0$ is exact. Such a morphism $f$ is called a {\em left $\mathcal{B}$-approximation of $M$}. The notions of {\em contravariantly
			finite subcategory} of $\modd \Lambda$ and {\em right $\mathcal{B}$-approximation} are defined dually. A {\em functorially
			finite subcategory} of $\modd \Lambda$ is a subcategory which is both covariantly and contravariantly finite
		    in $\modd \Lambda$.
	\end{definition}

For a given subcategory $\mathcal{B}$ of $\modd \Lambda$, we denote by $\gen(\mathcal{B})$ the subcategory of $\modd \Lambda$ containing all quotients of finite direct sums of objects
from $\mathcal{B}$ and $\filt(\mathcal{B})$ is the subcategory of all finitely generated right $\Lambda$-modules $M$ admitting
a finite filtration of the form
$$0 =M_0\subseteq M_1\subseteq M_2\subseteq\ldots\subseteq M_n = M$$
with $M_i/M_{i-1}\in \mathcal{B}$ for all $1\leq i\leq n$ and $n\in \mathbb{N}$.

Let $X$ be a collection of objects in $\modd \Lambda$. It is known that $\mathrm{T}(X)=\filt(\gen(X))$.

We denote a set of functorially finite wide subcategories of $\modd \Lambda$ by  $\f-wide \Lambda$ and a set of functorially finite wide subcategories $\mathcal{W}$ of $\modd \Lambda$ for which $\mathrm{T}(\mathcal{W})$ is a functorially finite torsion class, by $\ff-wide \Lambda$.

\begin{definition}\label{D8}
		\begin{itemize}
			\item[(1)] A $\Lambda$-module $M$ is called {\em brick}, if the endomorphism ring of $M$ is a division ring (i.e. every nonzero map $f:M\rightarrow M$ is an isomorphism). The set of all bricks in $\modd \Lambda$ is denoted by $\brick \Lambda$.
			\item[(2)] A set $\mathcal{S}$ of bricks in $\modd \Lambda$ is called a {\em semibrick} if $\Hom(S_1,S_2)=0$ for any $S_1\neq S_2\in \mathcal{S}$. The set of all semibricks in $\modd \Lambda$ is denoted by $\sbrick \Lambda$.
		\end{itemize}
	\end{definition}
	
For $\mathcal{W}\in\wide \Lambda$ we denote by $\ssim(\mathcal{W})$ the set of simple objects of $\mathcal{W}$. Ringel in \cite[1.2]{R} proved the following result.

\begin{theorem}\label{T5}$($\cite[1.2]{R}$)$ Let $\Lambda$ be a finite-dimensional algebra. Then we have a bijection between the set of wide subcategories and semibricks in $\modd \Lambda$:$$\xymatrix{
& \wide \Lambda  \ar@<1ex>[r]^{\ssim}
&\sbrick \Lambda. \ar@<1ex>[l]^{\filt}}$$
\end{theorem}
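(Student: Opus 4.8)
The plan is to verify that the two maps $\ssim$ and $\filt$ are well defined and mutually inverse; the argument splits into three parts, of which the second is the technical heart.

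First I would check that $\ssim(\mathcal{W})$ is a semibrick for every $\mathcal{W}\in\wide\Lambda$. Since $\mathcal{W}$ is closed under kernels and cokernels, it is an exact abelian subcategory of $\modd\Lambda$ in which kernels, images and cokernels are computed as in $\modd\Lambda$. For a simple object $S$ of $\mathcal{W}$ and a nonzero endomorphism $f\colon S\to S$, both $\Ker f$ and $\Imm f$ lie in $\mathcal{W}$ and are subobjects of $S$ there, so $\Ker f=0$ and $\Imm f=S$, whence $f$ is an isomorphism; thus $\Endd_\Lambda(S)$ is a division ring and $S$ is a brick. The same argument applied to a nonzero map between non-isomorphic simple objects of $\mathcal{W}$ produces an isomorphism, a contradiction, so $\Hom_\Lambda(S_1,S_2)=0$ for $S_1\neq S_2$ and $\ssim(\mathcal{W})\in\sbrick\Lambda$.

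The hard part is showing $\filt(\mathcal{S})\in\wide\Lambda$ for a semibrick $\mathcal{S}$. Closure under extensions is immediate from the definition of $\filt$, so the obstacle is closure under kernels and cokernels. I would first prove, by induction on the filtration length of $N\in\mathcal{C}:=\filt(\mathcal{S})$, the key sub-lemma that every nonzero morphism $h\colon S\to N$ with $S\in\mathcal{S}$ is a monomorphism with $\Coker h\in\mathcal{C}$: writing $0\to N'\to N\to S''\to 0$ with $S''\in\mathcal{S}$ and $N'\in\mathcal{C}$ of smaller length, one treats the case where $h$ factors through $N'$ (apply induction and extension-closure to build $\Coker h$ from $\Coker$ of the smaller map and $S''$) and the case where the composite of $h$ with $N\to S''$ is nonzero, which by Hom-orthogonality is an isomorphism and exhibits $h$ as a split monomorphism. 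Using this I would then prove, by induction on the filtration length of $M$, that for any $f\colon M\to N$ in $\mathcal{C}$ the objects $\Ker f$, $\Imm f$ and $\Coker f$ all lie in $\mathcal{C}$: choosing $0\to S\to M\to M'\to 0$ with $S\in\mathcal{S}$, one distinguishes whether $f|_S=0$, so that $f$ factors through $M'$ and the inductive hypothesis applies (with $\Ker f$ recovered from the extension of $\Ker$ of the reduced map by $S$), or $f|_S\neq 0$, in which case the sub-lemma lets one quotient $N$ by the copy of $S$ and reduce $f$ to a map $M'\to N/f(S)$, reconstructing $\Ker f$, $\Imm f$ and $\Coker f$ from those of the reduced map through short exact sequences and extension-closure. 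This establishes that $\filt(\mathcal{S})$ is wide.

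Finally I would verify that the two composites are identities. For $\mathcal{W}\in\wide\Lambda$, every object is a finite-length $\Lambda$-module and hence has finite length in the abelian category $\mathcal{W}$, so it admits a composition series with factors in $\ssim(\mathcal{W})$; this gives $\mathcal{W}\subseteq\filt(\ssim(\mathcal{W}))$, while the reverse inclusion holds since $\mathcal{W}$ contains $\ssim(\mathcal{W})$ and is extension-closed. For $\mathcal{S}\in\sbrick\Lambda$, each $S\in\mathcal{S}$ is simple in $\mathcal{C}=\filt(\mathcal{S})$: a nonzero subobject $A\hookrightarrow S$ in $\mathcal{C}$ contains a copy of some $S'\in\mathcal{S}$ from the bottom of its filtration, and Hom-orthogonality together with the brick property forces the composite $S'\to S$, hence $A\hookrightarrow S$, to be an isomorphism; conversely any simple object of $\mathcal{C}$ has a nonzero subobject in $\mathcal{S}$, which must then be all of it. Thus $\ssim(\filt(\mathcal{S}))=\mathcal{S}$. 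The main obstacle throughout is the second part, namely establishing closure of $\filt(\mathcal{S})$ under kernels and cokernels via the two inductions built on the sub-lemma.
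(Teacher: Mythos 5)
The paper does not actually prove this statement: Theorem \ref{T5} is quoted verbatim from Ringel \cite[1.2]{R} and used as a black box, so there is no internal proof to compare yours against. Judged on its own, your argument is correct and is essentially the classical one: the well-definedness of $\ssim$ via the observation that kernels, images and cokernels in a wide subcategory are computed as in $\modd\Lambda$; the key lemma that a nonzero map $S\to N$ with $S\in\mathcal{S}$ and $N\in\filt(\mathcal{S})$ is mono with cokernel in $\filt(\mathcal{S})$, proved by induction on filtration length and split into the two cases you describe; the second induction deducing closure under kernels, images and cokernels; and the two composite-identity checks, where your use of finite length of $\Lambda$-modules to produce composition series in $\mathcal{W}$, and of the bottom filtration term to identify simples of $\filt(\mathcal{S})$ with $\mathcal{S}$, are both sound.

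Two small points you should make explicit if you write this up in full. In Case 2 of the sub-lemma you must still verify $\Coker h\in\filt(\mathcal{S})$: since $h$ is split by $r=(\pi h)^{-1}\pi$, one has $\Coker h\cong\Ker r=\Ker\pi=N'$, which lies in $\filt(\mathcal{S})$ by hypothesis (note you cannot instead appeal to closure of $\filt(\mathcal{S})$ under direct summands, which is not yet known at that stage). Similarly, in Case B of the main induction the kernel is recovered not by an extension but by the isomorphism $\Ker f\cong\Ker\bar{f}$ induced by $M\to M/S$, which is injective on $\Ker f$ because $f|_S$ is mono and surjects onto $\Ker\bar f$ by a diagram chase. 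Neither point is a gap in the strategy; both are one-line completions.
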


Angeleri H{\"u}gel and Sentieri in \cite[Questions 5.12.]{AS} posted the following question:

\begin{question}$($\cite[Questions 5.12.]{AS}$)$\label{Q1} The following is a list of necessary conditions which are satisfied when $\Lambda$ is a $\tau$-tilting finite algebra. Is any of them a sufficient condition?
\begin{itemize}
\item[(1)] Every wide subcategory of $\modd \Lambda$ is functorially finite.
\item[(2)] Every wide subcategory closed under coproducts of $\Mod \Lambda$ is closed under products.
\item[(3)] The target of any ring epimorphism $\Lambda\rightarrow\Gamma$ with $\Tor^{\Lambda}_1(\Gamma, \Gamma) =0$ is an artin algebra.
\end{itemize}
\end{question}	

Note that $($2$)$ implies $($1$)$ (See \cite[Corollary 5.7]{AS}). Also, $($2$)$ and $($3$)$ imply that $\Lambda$ is $\tau$-tilting finite by \cite[Corollary 5.11]{AS}.

Now, we show that if Enomoto's conjecture is true, then the conditions $1$ and $2$ in the question of Angeleri H{\"u}gel and Sentieri are sufficient.

\begin{proposition}\label{T10} Let $\Lambda$ be a finite-dimensional $K$-algebra which satisfies Enomoto's conjecture. Then the following are equivalent.
\begin{itemize}
\item[(1)] Every wide subcategory closed under coproducts of $\Mod \Lambda$ is closed under products.
\item[(2)] Every wide subcategory of $\modd \Lambda$ is functorially finite.
\item[(3)] $\Lambda$ is brick-finite.
\end{itemize}
\end{proposition}
\begin{proof} $($1$)$$\Rightarrow$$($2$)$ follows from \cite[Corollary 5.7]{AS}.

$($2$)$$\Rightarrow$$($3$)$ Assume that every wide subcategory of $\modd \Lambda$ is functorially finite. Then by \cite[Proposition 4.12]{E3}, every $\mathcal{W}\in \wide \Lambda$ is equivalent to $\modd \Gamma$ for some artin algebra $\Gamma$ and hence $\ssim(\mathcal{W})$ is a finite set. Therefore, by Theorem \ref{T5} every semibrick in $\modd \Lambda$ is a finite set and the result follows by Enomoto's conjecture.

$($3$)$$\Rightarrow$$($1$)$ follows from \cite[Corollary 5.11]{AS}.
\end{proof}

In the following proposition, we collect some results of \cite{DIJ, E1} and \cite{MS}, which give some equivalent conditions for brick-finite algebras that we need in this paper.

\begin{proposition}\label{P1}
		Let $\Lambda$ be a finite-dimensional $K$-algebra. Then the following are equivalent.
		\begin{itemize}
			\item[(1)] $\Lambda$ is brick-finite.
			\item[(2)] $\wide \Lambda$ is a finite set.
\item[(3)] $\f-wide \Lambda$ is a finite set.
\item[(4)] $\ff-wide \Lambda$ is a finite set.
		\end{itemize}
	\end{proposition}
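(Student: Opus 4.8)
The plan is to establish the equivalences through a chain of implications, using Theorem~\ref{T5} (Ringel's bijection) as the central tool that translates counting bricks into counting wide subcategories. The immediate foundation is the Demonet--Iyama--Jasso result, cited in the introduction, that $\Lambda$ is brick-finite if and only if it is $\tau$-tilting finite; combined with Ringel's bijection $\ssim : \wide \Lambda \to \sbrick \Lambda$, this will give the backbone of the argument.

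First I would prove $(1)\Leftrightarrow(2)$. For the direction $(1)\Rightarrow(2)$, suppose $\Lambda$ is brick-finite, so $\brick \Lambda$ is a finite set. Every semibrick is by definition a subset of $\brick \Lambda$ consisting of pairwise Hom-orthogonal bricks, hence there are only finitely many semibricks, so $\sbrick \Lambda$ is finite; by the bijection of Theorem~\ref{T5}, $\wide \Lambda$ is finite as well. For the converse $(2)\Rightarrow(1)$, if $\wide \Lambda$ is finite then $\sbrick \Lambda$ is finite by Theorem~\ref{T5}; but each individual brick $B$ gives a singleton semibrick $\{B\}\in\sbrick\Lambda$, and distinct bricks give distinct singletons, so $\brick\Lambda$ injects into $\sbrick\Lambda$ and is therefore finite, i.e.\ $\Lambda$ is brick-finite.

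The implications $(2)\Rightarrow(3)\Rightarrow(4)$ are essentially free: since $\ff\text{-}\wide\Lambda \subseteq \fwide\Lambda \subseteq \wide\Lambda$ by the very definitions given in the excerpt, finiteness of the largest set forces finiteness of the smaller ones. The substantive remaining task is to close the loop, for which I would prove $(4)\Rightarrow(1)$. Here the key input is the $\tau$-tilting theoretic characterization of functorially finite torsion classes: functorially finite torsion classes are in bijection with support $\tau$-tilting modules, and there is a corresponding description of the functorially finite wide subcategories $\mathcal{W}$ with $\mathrm{T}(\mathcal{W})$ functorially finite in terms of this data. I expect this to rest on the results of \cite{DIJ, MS} that relate $\tau$-tilting finiteness to the finiteness of the set of functorially finite torsion classes, together with the passage between such torsion classes and the wide subcategories in $\ff\text{-}\wide\Lambda$.

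The main obstacle, as usual in results of this flavour, is the implication from the weakest-looking finiteness hypothesis $(4)$ back to brick-finiteness: one must argue that controlling only the functorially finite wide subcategories whose generated torsion class is again functorially finite already pins down $\tau$-tilting finiteness. The delicate point is ensuring that an infinite supply of bricks necessarily produces infinitely many members of $\ff\text{-}\wide\Lambda$ rather than merely infinitely many wide subcategories in general; this is where one invokes that under $\tau$-tilting \emph{in}finiteness there are infinitely many functorially finite torsion classes, each contributing a distinct wide subcategory in $\ff\text{-}\wide\Lambda$ via the standard correspondence, yielding the contrapositive of $(4)\Rightarrow(1)$. Once this is in place, the four conditions are linked in a cycle and the proposition follows.
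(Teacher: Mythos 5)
Your proposal is correct and structurally the same as the paper's proof: the inclusions $\ff-wide \Lambda\subseteq \f-wide \Lambda\subseteq \wide \Lambda$ give $(2)\Rightarrow(3)\Rightarrow(4)$, and your $(4)\Rightarrow(1)$ rests on exactly the paper's inputs, namely the bijection of \cite[Theorem 3.10]{MS} between functorially finite torsion classes and $\ff-wide \Lambda$ combined with the results of \cite{DIJ} identifying brick-finiteness with $\tau$-tilting finiteness and with finiteness of the set of functorially finite torsion classes. The only (cosmetic) difference is that for $(1)\Leftrightarrow(2)$ you give the direct counting argument---a semibrick consists of pairwise non-isomorphic bricks, so brick-finiteness bounds $\sbrick \Lambda$, and Theorem \ref{T5} transfers this to $\wide \Lambda$, with singleton semibricks giving the converse---whereas the paper simply cites \cite{MS}, \cite{DIJ} and \cite[Theorem 5.5]{E1}, whose proofs run along precisely these lines, so nothing is lost.
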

\begin{proof} $(1)\Rightarrow(2)$ follows from \cite[Corollary 3.11]{MS} and Corollary 2.9, Theorems 3.8 and 4.2 of \cite{DIJ} (see also \cite[Theorem 5.5]{E1}).

$(2)\Rightarrow(3)$ and $(3)\Rightarrow(4)$ are obvious.

$(4)\Rightarrow(1)$ follows from \cite[Theorem 3.10]{MS} and Corollary 2.9 and Theorem 4.2 of \cite{DIJ}.
\end{proof}

Now we collect basic definitions and results on lattice theory.

Recall that a \textit{complete lattice} $L$ is a partially ordered set such that, for any subset $X,$ there is a
unique largest element of $L$ smaller than all elements of $X,$ the \textit{meet} of $X,$ denote by
$\bigwedge X,$ and a unique smallest element of $L$ larger than all elements of $X,$ the \textit{join} of
$X,$ denote by $\bigvee X.$

\begin{definition}\label{D2}$($\cite[Definitions 2.1, 2.2 and 4.1]{E2}$)$ Let $L$ be a complete lattice.
\begin{itemize}
\item[(1)] A \textit{join representation} of $l\in L$ is an expression of the form $l = \bigvee A$ for some subset $A\subseteq L$.
\item[(2)] Let $l = \bigvee A =\bigvee B$ be two join representations. $A$ is said \textit{refine} $B$ if for every
$a \in A$ there is some $b\in B$ with $a\leq b$.
\item[(3)] A join representation $l =\bigvee A$ is called a \textit{canonical join representation} if it satisfies the
following conditions:
\begin{itemize}
\item[(I)] $A$ refines every join representation of $l$.
\item[(II)] $A$ is an antichain, that is, $a_1\leq a_2$ with $a_1, a_2 \in A$ implies that $a_1 = a_2$.
\end{itemize}
\item[(4)] An element $l\in L$ is called \textit{completely join-irreducible} if $l =\bigvee X$ for some subset
$X \subseteq L$ implies $l\in X$. The set of completely join-irreducible elements of $L$ is denoted by $\j-irr^c L$.
\item[(5)] An element $m \in L$ is called \textit{completely meet-irreducible} if $m = \bigwedge X$ for some
subset $X \subseteq L$ implies $m \in X$. The set of completely meet-irreducible elements of $L$ is denoted by $\m-irr^c L$.
\item[(6)] For any $l\in L$, $l_*$ is defined as follows.
$$l_*:=\bigvee\{x\in L | x<l\}.$$
\end{itemize}
\end{definition}

\begin{definition}\label{D3}$($\cite[Definition 2.3]{E2}$)$
A lattice $L$ is called \textit{completely semidistributive} if it is a complete lattice and
satisfies the following conditions:
\begin{itemize}
			\item[(1)] For $a, b\in L$ and $X \subseteq L$, if $a\vee x = b$ for every $x\in X$, then $a\vee(\bigwedge X)=b$ holds.
			\item[(2)] For $a, b\in L$ and $X \subseteq L$, if $a\wedge x = b$ for every $x\in X$, then $a\wedge (\bigvee X) = b$ holds.
\end{itemize}
\end{definition}

By \cite[Theorem 3.1(a)]{DIRRH}, $\tors \Lambda$ is completely semidistributive.

Let $L$ be a poset. The \textit{Hasse quiver} of $L$ which is denoted by $\Has L$
is the quiver whose vertex set is $L$, and we draw an arrow $a\rightarrow b$ if $a > b$
and there is no $x$ in $L$ satisfying $a > x > b$. In this case, we say that $a$ \textit{covers} $b$ and denote it by
$a\gtrdot b$. The set of arrows in $\Has L$ denoted by $\Has_1 L$  and its element is called a \textit{Hasse
arrow} of $L$.

\begin{definition}\label{D4}$($\cite[Definitions 2.5 and 2.8]{E2}$)$ Let $L$ be a completely semidistributive lattice.
\begin{itemize}
			\item[(1)] The \textit{meet-irreducible labeling} $\mu: \Has_1 L \rightarrow
\m-irr^c L$ is defined as follows. For any $a\rightarrow b\in \Has_1L$, $\mu(a\rightarrow b):=max\{x\in L | a\wedge x = b\}$. Note that by \cite[Proposition 2.4]{E2} for any $a\rightarrow b\in \Has_1L$, $\mu(a\rightarrow b)$ exists (see also \cite{RST}).
\item[(2)] The kappa map $\kappa: \j-irr^c L\rightarrow
\m-irr^c L$ is defined as $\kappa(j) := \mu( j\rightarrow j_*) = \max\{x\in L | j\wedge x = j_*\}.$
\end{itemize}
\end{definition}

We denote by $\CJR(x) := A$ if $x$ has a canonical join representation $x =\bigvee A$. A subset
$L_0$ of $L$ is defined as $L_0 = \{x\in L | \CJR(x)\,\,\text{exists}\}$.

Barnard, Todorov, and Zhu in \cite[Definition 1.1.3]{BTZ} extend the kappa map as follows.

\begin{definition}\label{D5} $($\cite[Definition 1.1.3]{BTZ}, \cite[Definition 4.6]{E2}$)$ Let $L$ be a completely semidistributive lattice. Then the extended kappa map $\bar{\kappa}: L_0 \rightarrow L$ is defined as follows:
$\bar{\kappa}(l) =\bigwedge\{\kappa(j) | j\in \CJR(l)\}$.
\end{definition}

Enomoto in \cite{E2}, by using the extended kappa map of
Barnard, Todorov, and Zhu in \cite{BTZ} defined the kappa order as follows.

\begin{definition}\label{D6} $($\cite[Definition 4.24]{E2}$)$ Let $L$ be a completely semidistributive lattice. The \textit{kappa order} $\leq_{\kappa}$ is defined
on the set $L_0$ of elements with canonical join representations as follows.
$$a\leq_{\kappa} b \Leftrightarrow a\leq b\: \text{and} \:\kappa(a)\geq\kappa(b).$$
\end{definition}

The kappa order $\leq_{\kappa}$ gives a poset structure on $L_0$ and we denote by $L_{\kappa}$ the poset $(L_0,\leq_{\kappa})$. Enomoto in \cite{E2} proved the following theorem.

\begin{theorem}\label{T1} $($\cite[Theorem 4.25]{E2}$)$ Let $\Lambda$ be a finite-dimensional algebra. The map $T : \wide \Lambda\rightarrow\tors \Lambda$, where $T(\mathcal{W})$ is the smallest torsion class containing $\mathcal{W},$ induces a poset isomorphism $\wide \Lambda\simeq (\tors \Lambda)_{\kappa}$.
\end{theorem}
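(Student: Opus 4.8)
The plan is to factor $T$ through Ringel's correspondence (Theorem~\ref{T5}) and the brick labeling of $\Has(\tors \Lambda)$, first producing a bijection of underlying sets $\wide\Lambda \xrightarrow{\sim} L_0$ and then checking that it intertwines inclusion with the kappa order. Write $T(X)$ and $\mathrm{F}(X)$ for the smallest torsion, resp.\ torsion-free, class containing $X$. For $\mathcal{W}\in\wide\Lambda$ with associated semibrick $\mathcal{S}=\ssim(\mathcal{W})$ one has $\mathcal{W}=\filt(\mathcal{S})$, and hence $T(\mathcal{W})=T(\mathcal{S})=\bigvee_{S\in\mathcal{S}}T(S)$, since adjoining filtrations does not enlarge the generated torsion class. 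The first task is to show that this is in fact the \emph{canonical} join representation of $T(\mathcal{W})$; in particular $T$ takes values in $L_0$.

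For this I would invoke the join-irreducible/brick dictionary for the completely semidistributive lattice $\tors\Lambda$: the assignment $S\mapsto T(S)$ is a bijection from $\brick\Lambda$ onto the completely join-irreducible torsion classes, with inverse sending a join-irreducible $j$ to the brick $\beta(j)$ labeling the Hasse arrow $j\gtrdot j_*$. A set of bricks $\mathcal{S}$ is a semibrick precisely when $\{T(S)\mid S\in\mathcal{S}\}$ is an antichain refining every join representation of its join, i.e.\ when $\bigvee_{S}T(S)$ is a canonical join representation with $\CJR\big(\bigvee_{S}T(S)\big)=\{T(S)\mid S\in\mathcal{S}\}$. Combining this with Theorem~\ref{T5} yields mutually inverse bijections $\wide\Lambda\leftrightarrow\sbrick\Lambda\leftrightarrow L_0$ whose composite is exactly $T$; the inverse sends $\mathcal{T}\in L_0$ to $\filt\{\beta(j)\mid j\in\CJR(\mathcal{T})\}$.

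It then remains to promote this set bijection to a poset isomorphism, i.e.\ to prove
\[
\mathcal{W}_1\subseteq\mathcal{W}_2 \iff T(\mathcal{W}_1)\le T(\mathcal{W}_2)\ \text{and}\ \bar\kappa(T(\mathcal{W}_1))\ge\bar\kappa(T(\mathcal{W}_2)).
\]
Monotonicity of $T$ and of $\mathrm{F}$ for inclusion is immediate from minimality, and the anti-isomorphism $(-)^{\perp}$ converts the join $\mathrm{F}(\mathcal{S})=\bigvee_{S\in\mathcal{S}}\mathrm{F}(S)$ into a meet. The decisive input is the single-brick identity $\kappa(T(S))={}^{\perp}\mathrm{F}(S)$, expressing that the kappa map carries the join-irreducible labeled by $S$ to the completely meet-irreducible torsion class labeled by the same brick; granting it, Definition~\ref{D5} gives
\[
\bar\kappa(T(\mathcal{W}))=\bigwedge_{S\in\mathcal{S}}\kappa(T(S))=\bigwedge_{S\in\mathcal{S}}{}^{\perp}\mathrm{F}(S)={}^{\perp}\Big(\bigvee_{S\in\mathcal{S}}\mathrm{F}(S)\Big)={}^{\perp}\mathrm{F}(\mathcal{W}).
\]
Now $\mathcal{W}_1\subseteq\mathcal{W}_2$ forces $\mathrm{F}(\mathcal{W}_1)\subseteq\mathrm{F}(\mathcal{W}_2)$, hence $\bar\kappa(T(\mathcal{W}_1))={}^{\perp}\mathrm{F}(\mathcal{W}_1)\supseteq{}^{\perp}\mathrm{F}(\mathcal{W}_2)=\bar\kappa(T(\mathcal{W}_2))$, which settles the forward implication. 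For the converse I would use the recovery formula $\mathcal{W}=T(\mathcal{W})\cap\mathrm{F}(\mathcal{W})$ for widely generated classes: if $T(\mathcal{W}_1)\le_\kappa T(\mathcal{W}_2)$, then $T(\mathcal{W}_1)\subseteq T(\mathcal{W}_2)$ and, via the displayed identity and the anti-isomorphism, $\mathrm{F}(\mathcal{W}_1)\subseteq\mathrm{F}(\mathcal{W}_2)$, whence $\mathcal{W}_1\subseteq\mathcal{W}_2$.

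The main obstacle is the identity $\kappa(T(S))={}^{\perp}\mathrm{F}(S)$: this is precisely where the two a priori independent labelings of the Hasse arrows of $\tors\Lambda$ — the one governing $j_*$ on the join side and the dual one on the meet side — must be shown to agree on the common brick, and it is exactly this agreement that forces the kappa order, rather than plain inclusion, to be the correct order on $L_0$. Making it rigorous requires the explicit description of the meet-irreducible labeling $\mu$ furnished by the semidistributivity of $\tors\Lambda$, together with the brick-labeling theorem for Hasse arrows; the recovery formula $\mathcal{W}=T(\mathcal{W})\cap\mathrm{F}(\mathcal{W})$ is the secondary technical point to verify.
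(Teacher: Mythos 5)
The paper itself contains no proof of Theorem~\ref{T1}: it is quoted verbatim from \cite[Theorem 4.25]{E2}, so the only meaningful comparison is with Enomoto's original argument --- and your reconstruction follows essentially that same route. The ingredients you assemble (the bijection between bricks and completely join-irreducible torsion classes, the identification of semibricks with canonical join representations, both from \cite{DIRRH}, the single-brick identity $\kappa(\mathrm{T}(S))={}^{\perp}S={}^{\perp}\mathrm{F}(S)$, and the resulting formula $\bar{\kappa}(\mathrm{T}(\mathcal{W}))={}^{\perp}\mathrm{F}(\mathcal{W})$) are exactly the ones used in \cite[Section 4]{E2}. The two points you flag as obstacles are indeed the crux, and both are true: the kappa identity for a single brick is a theorem of \cite{E2}, and the recovery formula $\mathcal{W}=\mathrm{T}(\mathcal{W})\cap\mathrm{F}(\mathcal{W})$ holds for every wide subcategory of $\modd\Lambda$ and admits a short direct proof, so neither constitutes a gap. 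For the latter, write $\mathcal{S}=\ssim(\mathcal{W})$, so that $\mathrm{T}(\mathcal{W})={}^{\perp}(\mathcal{S}^{\perp})$ and $\mathrm{F}(\mathcal{W})=({}^{\perp}\mathcal{S})^{\perp}$, and induct on the length of $0\neq X\in \mathrm{T}(\mathcal{W})\cap\mathrm{F}(\mathcal{W})$: there is a nonzero $f:S\rightarrow X$ with $S\in\mathcal{S}$, and $f$ is injective because $\Imm f$ is a nonzero submodule of $X\in\mathrm{F}(\mathcal{W})$, hence admits a nonzero map to some $S'\in\mathcal{S}$ whose composite with $S\twoheadrightarrow \Imm f$ must be an isomorphism by the semibrick property; moreover $X/S$ again lies in the intersection, since a nonzero submodule $Z/S\subseteq X/S$ with $\Hom(Z/S,\mathcal{S})=0$ would force every nonzero map $h:Z\rightarrow\mathcal{S}$ (one exists, as $0\neq Z\subseteq X$) to restrict to an isomorphism on $S$, splitting $Z\cong S\oplus Z/S$ and embedding $Z/S$ into $X$, contradicting $X\in({}^{\perp}\mathcal{S})^{\perp}$; induction then gives $X/S\in\filt(\mathcal{S})$, hence $X\in\filt(\mathcal{S})=\mathcal{W}$. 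With those two lemmas supplied (by citation to \cite{E2} and \cite{DIRRH}, or by the argument above), your proof is complete and is in substance the proof of the cited source.
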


A torsion class $\mathcal{T}\in \tors \Lambda$ is called \textit{widely generated} if there is a wide subcategory $\mathcal{W}\in \wide \Lambda$ such that $\mathcal{T}=\mathrm{T}(\mathcal{W})$. Dually, a torsion-free class $\mathcal{F}\in \torf \Lambda$ is called \textit{widely generated} if there is a wide subcategory $\mathcal{W}\in \wide \Lambda$ such that $\mathcal{F}=\mathrm{F}(\mathcal{W})$. We denote the set of all widely generated torsion classes in $\modd \Lambda$ by $\tors_w \Lambda$ and the set of all widely generated torsion-free classes in $\modd \Lambda$ by $\torf_w \Lambda$. According to \cite[Theorem 3.25]{BH} a torsion class $\mathcal{T}\in \tors \Lambda$
has a canonical join representation if and only if it is widely generated. Therefore $(\tors \Lambda)_\kappa=\tors_w \Lambda$ as sets.

Now we recall the definition of minimal (co-)extending modules and some known results in this direction that we need throughout this paper.

\begin{definition}\label{D7} $($\cite[Definition 1.1]{BCZ}$)$
Let $\mathcal{T}$ be a torsion class in $\modd \Lambda$. $M\in \modd \Lambda$ is called a \textit{minimal extending module} for $\mathcal{T}$ if it satisfies the
following three properties:
\begin{itemize}
\item[(P1)] Every proper factor of $M$ is in $\mathcal{T}$;
\item[(P2)] If $0\rightarrow M\rightarrow X\rightarrow T\rightarrow 0$ is a non-split exact sequence with $T\in \mathcal{T}$, then
$X\in \mathcal{T}$;
\item[(P3)] $\Hom_\Lambda(\mathcal{T} ,M) = 0$.
\end{itemize}
The set of all minimal extending modules for a torsion class $\mathcal{T}$ is denoted by $\ME(\mathcal{T})$. Dually, for a torsion-free class $\mathcal{F}$ in $\modd \Lambda$ \textit{minimal co-extending modules} for $\mathcal{F}$ are defined. The set of all minimal co-extending modules for a torsion-free class $\mathcal{F}$ is denoted by $\MCE(\mathcal{F})$.
\end{definition}

Note that minimal extending modules for a torsion class $\mathcal{T}$ were
studied by several authors under the name torsion-free, almost torsion modules \cite{AS, S}.

\begin{lemma}\label{L1}$($\cite[Proposition 1.1]{EN}$($see also \cite[Remark 3.9 and Proposition 3.10]{AS}$))$
		Let $\mathcal{T}$ be a torsion class and $\mathcal{F}$ be a torsion-free class in $\modd \Lambda$.
		\begin{itemize}
			\item[(1)] Every minimal extending modules for a torsion class $\mathcal{T}$ is a brick.
\item[(2)] Every minimal co-extending modules for a torsion-free class $\mathcal{F}$ is a brick.
			\item[(3)] The set of isomorphism classes of all minimal extending modules for a torsion class $\mathcal{T}$ forms a semibrick.
\item[(4)] The set of isomorphism classes of all minimal co-extending modules for a torsion-free class $\mathcal{F}$ forms a semibrick.
\end{itemize}
\end{lemma}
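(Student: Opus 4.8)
The plan is to establish parts (1) and (3) directly and then deduce (2) and (4) by the standard $K$-linear duality, since the whole statement is self-dual.

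For (1), I would fix $M \in \ME(\mathcal{T})$ and an arbitrary nonzero endomorphism $f\colon M \to M$, and show first that $f$ must be injective. If $\ker f \neq 0$, then $\Imm f \cong M/\ker f$ is a proper factor of $M$, so $\Imm f \in \mathcal{T}$ by (P1); factoring $f$ as $M \twoheadrightarrow \Imm f \hookrightarrow M$ exhibits the inclusion $\Imm f \hookrightarrow M$ as a nonzero morphism from an object of $\mathcal{T}$ into $M$, contradicting (P3). Thus $f$ is injective, and since $M$ is finite-dimensional over $K$, an injective endomorphism is automatically bijective; hence $\Endd_\Lambda(M)$ is a division ring and $M$ is a brick.

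For (3), part (1) already gives that every element of $\ME(\mathcal{T})$ is a brick, so I would only need to verify Hom-orthogonality. Given $M_1, M_2 \in \ME(\mathcal{T})$ with $M_1 \not\cong M_2$ and a nonzero $f\colon M_1 \to M_2$, I would argue by cases on $f$. If $f$ is not injective, the image $\Imm f \cong M_1/\ker f$ is a proper factor of $M_1$, hence lies in $\mathcal{T}$ by (P1), and its inclusion into $M_2$ contradicts (P3) for $M_2$. So $f$ is injective; and if it were also surjective it would be an isomorphism, against $M_1 \not\cong M_2$. This leaves the injective-non-surjective case, which I expect to be the crux: here $\Imm f \subsetneq M_2$ is a proper nonzero submodule, so $M_2/\Imm f$ is a proper factor of $M_2$ and lies in $\mathcal{T}$ by (P1). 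The sequence $0 \to M_1 \to M_2 \to M_2/\Imm f \to 0$ cannot split, for a splitting would write the brick (hence indecomposable) module $M_2$ as a direct sum of the two nonzero modules $M_1$ and $M_2/\Imm f$. Applying (P2) to this non-split sequence forces $M_2 \in \mathcal{T}$, whereupon $\id_{M_2}$ becomes a nonzero element of $\Hom_\Lambda(\mathcal{T}, M_2)$, contradicting (P3). Hence $f = 0$, so by symmetry $\ME(\mathcal{T})$ is Hom-orthogonal and, being a set of bricks, a semibrick.

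Finally, I would obtain (2) and (4) from (1) and (3) applied to $\Lambda^{\mathrm{op}}$ via the duality $\D = \Hom_K(-, K)\colon \modd \Lambda \to \modd \Lambda^{\mathrm{op}}$, which swaps submodules with quotients and torsion-free classes with torsion classes and carries minimal co-extending modules to minimal extending modules. The only genuinely delicate point is the injective-non-surjective case in (3), where properties (P1), (P2) and the indecomposability supplied by (1) must be combined to reach the contradiction $M_2 \in \mathcal{T}$; the other cases reduce quickly to (P1) together with (P3).
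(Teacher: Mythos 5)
Your proof is correct, but note that the paper does not actually prove this lemma: it states it purely as a citation to \cite[Proposition 1.1]{EN} and \cite[Remark 3.9 and Proposition 3.10]{AS} (the facts originate with Barnard--Carroll--Zhu \cite{BCZ}). So there is no internal argument to compare against; what you supply is a self-contained verification, and it is essentially the standard argument from those sources. Your case analysis in (3) is handled correctly, including the genuinely delicate case of an injective non-surjective $f\colon M_1\to M_2$: you use (P1) for $M_2$ to place $M_2/\Imm f$ in $\mathcal{T}$, indecomposability of the brick $M_2$ (available from part (1)) to rule out splitting, then (P2) \emph{for $M_1$} to force $M_2\in\mathcal{T}$, which contradicts (P3) for $M_2$ via $\id_{M_2}$ --- exactly the right combination of the three axioms. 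The reduction of (2) and (4) to (1) and (3) through $\D=\Hom_K(-,K)\colon \modd\Lambda\to\modd\Lambda^{\mathrm{op}}$ is also sound: $\D$ exchanges torsion and torsion-free classes, carries minimal co-extending modules for $\mathcal{F}$ to minimal extending modules for the torsion class $\D\mathcal{F}$, and preserves bricks and Hom-orthogonality since $\Endd(\D M)\cong\Endd(M)^{\mathrm{op}}$. The only implicit convention worth flagging is that minimal (co-)extending modules are nonzero (the zero module formally satisfies (P1)--(P3) but is not a brick); this is the convention intended in \cite{BCZ} and in the paper, so it creates no gap. The value of your write-up over the paper's treatment is precisely its self-containedness: a reader gets an elementary proof instead of a chain of references.
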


\begin{theorem}\label{T2}$($\cite[Theorem 1.2]{BCZ}$($see also \cite{DIRRH}$))$
Let $\mathcal{T}$ be a torsion class in $\modd \Lambda$. Then the map
$$\eta_{\mathcal{T}} : M \longmapsto \filt(\mathcal{T}\cup\{M\})$$
is a bijection from the set $\ME(\mathcal{T})$ to the set of $\mathcal{T}'\in \tors \Lambda$ such that $\mathcal{T}\lessdot \mathcal{T}'$.
\end{theorem}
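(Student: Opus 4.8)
The plan is to verify in turn that $\eta_{\mathcal T}$ is well defined, injective, and surjective. Throughout I write $\mathcal F=\mathcal T^{\bot}$ for the associated torsion-free class, and for $X\in\modd\Lambda$ I let $tX\in\mathcal T$ denote its torsion submodule, so that $0\to tX\to X\to X/tX\to 0$ is the canonical sequence with $X/tX\in\mathcal F$; recall also that $\mathrm T(X)=\filt(\gen(X))$ and that $\filt(-)$ is always closed under extensions. Note that $\mathcal T\cap\mathcal F=0$, and that by (P3) any $M\in\ME(\mathcal T)$ lies in $\mathcal F$ and in particular is nonzero and not in $\mathcal T$.

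First I would check that for $M\in\ME(\mathcal T)$ the category $\mathcal T':=\filt(\mathcal T\cup\{M\})$ is a torsion class covering $\mathcal T$. Closure under extensions is automatic from $\filt$. For closure under quotients I would induct on the length of a filtration of $X\in\mathcal T'$: writing $X_{n-1}\subset X$ with $X/X_{n-1}\in\mathcal T\cup\{M\}$, any quotient $X\twoheadrightarrow Y$ restricts to a quotient of $X_{n-1}$ (in $\mathcal T'$ by induction) with cokernel a quotient of $X/X_{n-1}$; a quotient of an object of $\mathcal T$ stays in $\mathcal T$, while a quotient of $M$ is either $M$ or, by (P1), lies in $\mathcal T$, so the cokernel lies in $\mathcal T'$ and $Y\in\mathcal T'$ by extension-closure. (Only (P1) is used here.) Since $M\notin\mathcal T$ we have $\mathcal T\subsetneq\mathcal T'$. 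To see this is a cover, suppose $\mathcal T\subseteq\mathcal T''\subseteq\mathcal T'$ with $\mathcal T''\neq\mathcal T$ a torsion class, and pick $N\in\mathcal T''\setminus\mathcal T$. Since $\Hom_\Lambda(\mathcal T,N/tN)=0$, the bottom term of any $(\mathcal T\cup\{M\})$-filtration of $N/tN$ cannot lie in $\mathcal T$, hence equals $M$, giving $M\hookrightarrow N/tN$ with $(N/tN)/M\in\mathcal T'$; inducting on length (passing to $\mathcal T$-torsion-free parts) and invoking (P2) to split the extension $0\to M\to N/tN\to (N/tN)/M\to 0$ in the case $(N/tN)/M\in\mathcal T$, one obtains $M\in\mathcal T''$. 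Then $\mathcal T'=\filt(\mathcal T\cup\{M\})\subseteq\mathcal T''$, so $\mathcal T''=\mathcal T'$, establishing $\mathcal T\lessdot\mathcal T'$.

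For injectivity, suppose $\filt(\mathcal T\cup\{M\})=\filt(\mathcal T\cup\{M'\})$ with $M,M'\in\ME(\mathcal T)$. Both lie in $\mathcal F$, so the bottom-term argument just used (applied to $M'$, which equals its own torsion-free part) shows $M\hookrightarrow M'$, and symmetrically $M'\hookrightarrow M$; comparing lengths, both inclusions are isomorphisms and $M\cong M'$.

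For surjectivity, given a cover $\mathcal T\lessdot\mathcal T'$ I would choose $M$ to be a module of minimal length among the nonzero objects of $\mathcal T'\cap\mathcal F$ (this class is nonempty because any $X\in\mathcal T'\setminus\mathcal T$ has $X/tX\neq0$ there). Property (P3) holds by construction. For (P1), any proper quotient $Q$ of $M$ lies in $\mathcal T'$, and if $Q\notin\mathcal T$ then $Q/tQ$ would be a shorter nonzero object of $\mathcal T'\cap\mathcal F$, contradicting minimality, so $Q\in\mathcal T$; together with $M\in\mathcal F$ this also forces $M$ to be a brick, since the image of a nonzero non-invertible endomorphism would be a proper quotient lying in $\mathcal T$ while also embedding into $M\in\mathcal F$. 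Once (P1) and (P3) hold, $\filt(\mathcal T\cup\{M\})$ is a torsion class (by the computation in the first paragraph) lying strictly between $\mathcal T$ and $\mathcal T'$, so the cover hypothesis yields $\filt(\mathcal T\cup\{M\})=\mathcal T'$. It remains to verify (P2), and this is the step I expect to be the main obstacle. Given a non-split $0\to M\to X\to T\to 0$ with $T\in\mathcal T$ and, for contradiction, $X\notin\mathcal T$, one has $X\in\mathcal T'$ and $X/tX\neq0$ in $\mathcal T'\cap\mathcal F$. Analyzing the composite $M\to X\to X/tX$: property (P1) together with the torsion-freeness of $X/tX$ rules out a nonzero kernel (a proper quotient of $M$ would land in $\mathcal T$ yet embed into $\mathcal F$), and the case $M\subseteq tX$ is excluded because it would exhibit $X/tX$ as a quotient of $T\in\mathcal T$, hence as an element of $\mathcal T\cap\mathcal F=0$; thus $M\hookrightarrow X/tX$ with torsion cokernel. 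The remaining work is to show, using minimality of the length of $M$ together with the fact that $M$ is a brick, that such an embedding forces the original sequence to split, contradicting non-splitness and giving $X\in\mathcal T$. Carrying out this last reduction cleanly is the delicate point of the argument.
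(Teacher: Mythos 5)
The paper itself contains no proof of Theorem \ref{T2}; it is imported verbatim from \cite[Theorem 1.2]{BCZ}, so your attempt has to be judged against the statement itself. Your first two parts are correct: quotient-closure of $\filt(\mathcal{T}\cup\{M\})$ via (P1), the observation that the bottom term of a $(\mathcal{T}\cup\{M\})$-filtration of a nonzero $\mathcal{T}$-torsion-free module must be isomorphic to $M$, the induction combining this with (P2) to get the covering property, and the two-sided embedding argument for injectivity all go through. The genuine gap is exactly the step you flag yourself: property (P2) for your minimal-length module $M$ is never established, and it is the core of the theorem, not a final routine check. Worse, your announced strategy --- finishing ``using minimality of the length of $M$ together with the fact that $M$ is a brick'' --- cannot work, because those ingredients are available in non-cover situations where (P2) is false. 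Take $\Lambda$ the path algebra of the quiver $1\rightarrow 2$, $\mathcal{T}=\add S_1$ and $\mathcal{T}'=\modd\Lambda$, so $\mathcal{T}^{\bot}=\add(S_2\oplus P_1)$: the minimal-length nonzero module in $\mathcal{T}'\cap\mathcal{T}^{\bot}$ is $M=S_2$, a brick satisfying (P1) and (P3), and even $\filt(\mathcal{T}\cup\{S_2\})=\mathcal{T}'$; yet (P2) fails for the non-split sequence $0\rightarrow S_2\rightarrow P_1\rightarrow S_1\rightarrow 0$, since $P_1\notin\mathcal{T}$. (Of course $\mathcal{T}\lessdot\mathcal{T}'$ fails here: $\add(S_1\oplus P_1)$ sits strictly in between.) So any correct completion must invoke the covering hypothesis a second time, which your (P2) discussion never does.

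Here is how to close the gap from exactly the configuration you reached. Put $Y=X/tX$, so $M\hookrightarrow Y$, $Y\in\mathcal{T}'\cap\mathcal{T}^{\bot}$ is nonzero, and $Y/M\in\mathcal{T}$. If $Y/M=0$, then $M+tX=X$ and $M\cap tX=0$ give $X=M\oplus tX$ internally, so projection onto the first summand splits the original sequence, a contradiction. If $Y/M\neq 0$, use the cover: since $Y\in\mathcal{T}'\setminus\mathcal{T}$, we get $\mathrm{T}(\mathcal{T}\cup\{Y\})=\mathcal{T}'$, hence $M\in\filt(\gen(\mathcal{T}\cup\{Y\}))$. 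The bottom nonzero term $M_1$ of such a filtration is a submodule of $M$, so $\Hom_{\Lambda}(\mathcal{T},M_1)=0$ and therefore $M_1$ is a quotient of $Y^{n}$ alone; thus $M_1\in\mathcal{T}'\cap\mathcal{T}^{\bot}$, and your minimality forces $M_1=M$. So there is a surjection $Y^{n}\twoheadrightarrow M$, in particular a nonzero map $\psi:Y\rightarrow M$. If the composite $M\hookrightarrow Y\xrightarrow{\psi}M$ is nonzero, it is an isomorphism ($M$ is a brick), so $M$ is a direct summand of $Y$; the complementary summand is isomorphic to $Y/M\in\mathcal{T}$ but is also a submodule of $Y\in\mathcal{T}^{\bot}$, hence zero, contradicting $Y/M\neq 0$. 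If the composite is zero, $\psi$ factors through $Y/M\in\mathcal{T}$, contradicting (P3). Either way $Y/M\neq0$ is impossible, so the sequence splits, contradiction, and $X\in\mathcal{T}$ follows. This surjection $Y^{n}\twoheadrightarrow M$ --- unavailable in the example above precisely because $\mathcal{T}'$ is not a cover there --- is the missing idea your sketch needs.
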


By using the standard duality  $(-)^{\bot} : \tors \Lambda\rightarrow \torf \Lambda$, one can relate
the upper covers of $\mathcal{T}\in \tors \Lambda$ to the lower covers of $\mathcal{T}^\bot$ in $\torf \Lambda$. Barnard, Carroll, and Zhu in \cite{BCZ} characterized covers of torsion-free classes by using minimal co-extending modules.

\begin{theorem}\label{T18}$($\cite[Theorem 2.11]{BCZ}$)$
Let $\mathcal{F}$ be a torsion-free class in $\modd \Lambda$. Then the map
$$\zeta_{\mathcal{F}} : M \longmapsto \filt(\mathcal{F}\cup\{M\})$$
is a bijection from the set $\MCE(\mathcal{F})$ to the set of $\mathcal{F}'\in \torf \Lambda$ such that $\mathcal{F}\lessdot \mathcal{F}'.$
\end{theorem}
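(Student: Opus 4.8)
The plan is to deduce this statement as the categorical dual of Theorem~\ref{T2}, by means of the standard $K$-linear duality $\D=\Hom_K(-,K):\modd\Lambda\to\modd\Lambda^{\mathrm{op}}$. First I would record the dictionary that $\D$ supplies. Since $\D$ is a contravariant equivalence that is bijective on isomorphism classes, it sends each short exact sequence $0\to A\to B\to C\to 0$ to $0\to \D C\to \D B\to \D A\to 0$, hence it interchanges subobjects with quotients and preserves the class of extensions. Consequently $\D$ carries the torsion-free class $\mathcal{F}\subseteq\modd\Lambda$ to a torsion class $\D\mathcal{F}\subseteq\modd\Lambda^{\mathrm{op}}$, and restricts to a bijection $\torf\Lambda\to\tors\Lambda^{\mathrm{op}}$. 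Because $\D$ is bijective on objects, it preserves (rather than reverses) inclusions of subcategories, so it is an isomorphism of the underlying posets and therefore sends the cover $\mathcal{F}\lessdot\mathcal{F}'$ to a cover $\D\mathcal{F}\lessdot\D\mathcal{F}'$. This is precisely the point where one must not confuse $\D$ with the order-anti-isomorphism $(-)^{\bot}$, which would reverse the covering relation and land on lower covers.

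Next I would check that $\D$ interchanges $\MCE(\mathcal{F})$ with $\ME(\D\mathcal{F})$. Writing $N=\D M$, the three conditions defining $N\in\ME(\D\mathcal{F})$ translate, under the sub/quotient interchange and the natural isomorphism $\Hom_{\Lambda^{\mathrm{op}}}(\D\mathcal{F},N)\cong\Hom_\Lambda(M,\mathcal{F})$, exactly into the dual conditions (the duals of (P1)--(P3)) that define $M\in\MCE(\mathcal{F})$: proper factors of $N$ lying in $\D\mathcal{F}$ become proper submodules of $M$ lying in $\mathcal{F}$; a nonsplit sequence $0\to N\to X\to G\to 0$ with $G\in\D\mathcal{F}$ dualizes to a nonsplit $0\to F\to \D^{-1}X\to M\to 0$ with $F\in\mathcal{F}$; and the two Hom-vanishing conditions correspond. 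Since minimal co-extending modules are by definition the dual notion, this matching is really a verification that the two definitions are set up compatibly.

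Finally I would assemble the pieces. Applying Theorem~\ref{T2} to the torsion class $\D\mathcal{F}$ in $\modd\Lambda^{\mathrm{op}}$ shows that $\eta_{\D\mathcal{F}}:N\mapsto\filt(\D\mathcal{F}\cup\{N\})$ is a bijection from $\ME(\D\mathcal{F})$ onto the set of upper covers of $\D\mathcal{F}$. As $\D$ turns a filtration with factors in a class $\mathcal{C}$ into a filtration with factors in $\D\mathcal{C}$, we get $\D^{-1}\bigl(\filt(\D\mathcal{F}\cup\{N\})\bigr)=\filt(\mathcal{F}\cup\{\D^{-1}N\})$, whence $\zeta_{\mathcal{F}}=\D^{-1}\circ\eta_{\D\mathcal{F}}\circ\D$ on $\MCE(\mathcal{F})$. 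Composing the three bijections---$\D:\MCE(\mathcal{F})\to\ME(\D\mathcal{F})$, then $\eta_{\D\mathcal{F}}$, then $\D^{-1}$ from upper covers of $\D\mathcal{F}$ onto upper covers of $\mathcal{F}$---yields that $\zeta_{\mathcal{F}}$ is a bijection from $\MCE(\mathcal{F})$ onto $\{\mathcal{F}'\in\torf\Lambda : \mathcal{F}\lessdot\mathcal{F}'\}$, as claimed.

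The main obstacle is bookkeeping rather than mathematics: one must track the direction of the covering relation carefully, ensuring that the covariant behaviour of $\D$ on subcategory inclusions sends \emph{upper} covers of $\mathcal{F}$ to upper covers of $\D\mathcal{F}$ (in contrast to $(-)^{\bot}$, which inverts them), and one must confirm that the dualized forms of (P1)--(P3) are exactly the defining properties of $\MCE(\mathcal{F})$ with no mismatch in the split/nonsplit or Hom-direction conventions.
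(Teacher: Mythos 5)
Your proposal is correct, but it takes a genuinely different route from the paper: the paper offers no proof at all for this statement, quoting it directly as \cite[Theorem 2.11]{BCZ}, and its preceding remark even points toward the anti-isomorphism $(-)^{\bot}:\tors\Lambda\rightarrow\torf\Lambda$, which reverses the order and hence relates \emph{upper} covers of a torsion class to \emph{lower} covers of the corresponding torsion-free class --- so that duality alone cannot transfer Theorem~\ref{T2} (about upper covers in $\tors\Lambda$) to the present statement (about upper covers in $\torf\Lambda$). You instead give a complete, self-contained derivation from Theorem~\ref{T2} via the $K$-linear duality $\D=\Hom_K(-,K):\modd\Lambda\rightarrow\modd\Lambda^{\mathrm{op}}$, and all the checks you flag are the right ones and are carried out correctly: $\D$ is covariant on the inclusion poset of subcategories, so it induces an isomorphism $\torf\Lambda\simeq\tors\Lambda^{\mathrm{op}}$ preserving the covering relation $\lessdot$ (this is exactly the point where confusing $\D$ with $(-)^{\bot}$ would derail the argument, as you note); the sub/quotient interchange together with $\Hom_{\Lambda^{\mathrm{op}}}(\D\mathcal{F},\D M)\cong\Hom_\Lambda(M,\mathcal{F})$ matches (P1)--(P3) for $\D\mathcal{F}$ with the dual conditions defining $\MCE(\mathcal{F})$; and $\D$ turns filtrations with factors in $\mathcal{C}$ into filtrations with factors in $\D\mathcal{C}$, giving $\zeta_{\mathcal{F}}=\D^{-1}\circ\eta_{\D\mathcal{F}}\circ\D$, after which Theorem~\ref{T2} applied to the finite-dimensional algebra $\Lambda^{\mathrm{op}}$ finishes the proof. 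What each approach buys: the paper's citation buys brevity and defers to the original source, while your argument makes the logical dependence explicit --- the torsion-free statement is a formal consequence of the torsion statement, with no new representation-theoretic input beyond standard duality --- which is arguably more informative in a paper that, like this one, uses both Theorem~\ref{T2} and its torsion-free counterpart side by side.
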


According to \cite[Section 2.1]{R1}, a torsion class $\mathcal{T}\in \tors \Lambda$ is called \textit{finitely generated} if there is a $M\in \modd \Lambda$ with $\mathcal{T}=\mathrm{T}(M)$. Finitely generated torsion classes are also called compact torsion classes in the literature (see \cite{S}). Ringel in \cite[Addendum of Theorem 2.2]{R1} showed that any module $M\in \modd \Lambda$ has a factor module $N$ which is a semibrick and such that $\mathrm{T}(M)=\mathrm{T}(N)$.

\section{main results}
In this section, we prove our main results in this paper. First, we provide the following equivalent condition for brick-finite algebras, which has an important role in the proof of our main result.

\begin{theorem}\label{T3} Let $\Lambda$ be a finite-dimensional algebra. Then $\Lambda$ is brick-finite if and only if the following conditions are satisfied.
\begin{itemize}
			\item[(I)] Any chain of wide subcategories of $\modd \Lambda$ becomes eventually constant.
\item[(II)] For any torsion class $\mathcal{T}\in(\tors \Lambda)_{\kappa}$ there are finitely many $\mathcal{T}_i\in(\tors \Lambda)_{\kappa}$ that $\mathcal{T}\lessdot_\kappa\mathcal{T}_i$.
\end{itemize}
\end{theorem}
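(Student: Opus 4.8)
The plan is to prove the two implications separately, the forward one being short. Suppose $\Lambda$ is brick-finite. Then $\wide \Lambda$ is finite by Proposition \ref{P1}, so every chain in $\wide \Lambda$ is finite and hence eventually constant, which gives (I). For (II), fix $\mathcal{T}\in\tors \Lambda$; by Theorem \ref{T2} the covers of $\mathcal{T}$ are in bijection, via $\eta_{\mathcal{T}}$, with $\ME(\mathcal{T})$, and by Lemma \ref{L1}(3) the set $\ME(\mathcal{T})$ is a semibrick, in particular a set of pairwise non-isomorphic bricks. Since $\Lambda$ is brick-finite there are only finitely many bricks in $\modd \Lambda$, so $\ME(\mathcal{T})$, and hence the set of covers of $\mathcal{T}$, is finite, which gives (II).

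For the reverse implication I would argue by contraposition: assuming (II) together with the failure of brick-finiteness, I aim to build an infinite strictly increasing chain of wide subcategories, contradicting (I). By Proposition \ref{P1} the hypothesis makes $\wide \Lambda$ infinite, so by Theorem \ref{T1} the poset $(\tors \Lambda)_{\kappa}$, and a fortiori $\tors \Lambda$ itself, is infinite. Read through Theorem \ref{T2} and Lemma \ref{L1}, condition (II) says exactly that the Hasse quiver of $\tors \Lambda$ has finite out-degree. Rooting this quiver at the least torsion class $\{0\}$ and declaring the children of a vertex to be its covers produces a finitely branching tree; if it is infinite, König's lemma furnishes an infinite chain $\{0\}=\mathcal{T}_0\lessdot\mathcal{T}_1\lessdot\mathcal{T}_2\lessdot\cdots$ of covers, with brick labels $B_i$ and $\mathcal{T}_i=\filt(\mathcal{T}_{i-1}\cup\{B_i\})$ supplied by Theorem \ref{T2}. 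In the complementary case, where only finitely many torsion classes are reachable from $\{0\}$ by finite cover-paths, one instead extracts an infinite descending chain of covers, which I would treat dually using the anti-isomorphism $(-)^{\bot}$ and Theorem \ref{T18}.

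The main obstacle, and the real content of the theorem, is to convert such a chain in $(\tors \Lambda,\subseteq)$ into an infinite chain in $\wide \Lambda\cong(\tors \Lambda)_{\kappa}$, because the two orders genuinely differ: $a\leq_{\kappa}b$ forces $a\subseteq b$ but not conversely, and the passage $\mathcal{T}\mapsto\mathrm{W}(\mathcal{T})$ recovering a wide subcategory from a torsion class is not order preserving, so a cover chain in $\tors \Lambda$ need not map to a chain of wide subcategories. I would control this using canonical join representations and the kappa map: each widely generated $\mathcal{T}_i$ lies in the domain of $\leq_{\kappa}$, its $\CJR$ is assembled from the completely join-irreducible torsion classes $\mathrm{T}(B)$ attached to bricks, and (II) forces the fibres of the map $j\mapsto j_{*}$ to be finite, since otherwise a single torsion class would acquire infinitely many covers. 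Using this finiteness to bound how join-irreducibles accumulate along the chain, the aim is to pass to a subsequence whose associated wide subcategories are strictly $\kappa$-increasing, so that (I) fails. In short, (II) bounds the branching while (I) forbids the chains, and the argument is the ensuing König-type contest between an infinite but finitely branching poset and the absence of infinite chains; the delicate point is to align the ambient inclusion order on $\tors \Lambda$ with the kappa order transported from $\wide \Lambda$ by Theorem \ref{T1}.
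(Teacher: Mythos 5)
Your forward direction is correct and coincides with the paper's: Proposition \ref{P1} gives (I), and Theorem \ref{T2} together with Lemma \ref{L1} gives (II). The reverse direction, however, contains a genuine gap, and it is exactly the step you yourself label ``the main obstacle'': after running K\"onig's lemma in the inclusion order $(\tors \Lambda,\subseteq)$ you hold an infinite cover chain of torsion classes, and the promised conversion of this chain into an infinite chain of wide subcategories is never carried out --- it is announced as an ``aim'' with no argument. The conversion is not a deferrable technicality. First, the torsion classes produced by your K\"onig argument need not be widely generated, i.e.\ need not lie in the set $(\tors \Lambda)_0$ of classes admitting canonical join representations at all (by Corollary \ref{C2}, brick-infiniteness guarantees that non-widely-generated torsion classes exist); for such classes there is no associated wide subcategory and the kappa order is not even defined, so your plan of comparing $\CJR$'s along the chain has no starting point. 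Second, even for the members of the chain that do lie in $(\tors \Lambda)_\kappa$, an inclusion $\mathcal{T}\subsetneq\mathcal{T}'$ carries no kappa-comparability, and you give no mechanism by which passing to a subsequence makes the associated wide subcategories increase; your (correct) observation that (II) forces the fibres of $j\mapsto j_*$ to be finite is never connected to this. Third, your complementary case is unjustified: if only finitely many torsion classes are reachable from $0$ by cover paths, nothing you say produces an infinite descending cover chain, and the dual treatment via $(-)^{\bot}$ and Theorem \ref{T18} would require finiteness of \emph{lower} covers in $\tors \Lambda$ (equivalently, upper covers of torsion-free classes), which hypothesis (II) does not provide.

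The paper closes precisely this hole by placing the finiteness dichotomy in the kappa order from the outset: if $\Lambda$ is brick-infinite then $\wide \Lambda\simeq(\tors \Lambda)_{\kappa}$ (Theorem \ref{T1}) is infinite, so either $(\tors \Lambda)_{\kappa}$ contains an infinite chain --- which is \emph{automatically} an infinite chain of wide subcategories, because $T(\mathcal{W}_1)\lneq_{\kappa}T(\mathcal{W}_2)$ if and only if $\mathcal{W}_1\subsetneq\mathcal{W}_2$ by the proof of \cite[Theorem 4.25]{E2}, contradicting (I) --- or some element of $(\tors \Lambda)_{\kappa}$ has infinitely many kappa-covers, which the paper then converts into a single torsion class with infinitely many covers in $(\tors \Lambda,\subseteq)$, contradicting (II). In other words, the paper only ever transfers cover data at one element from the kappa order to the inclusion order, and never transfers a chain from the inclusion order to the kappa order; your proposal attacks the comparison in the hard direction, which is why it stalls exactly where you say it does.
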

\begin{proof} Assume that $\Lambda$ is brick-finite. By Proposition \ref{P1} and Theorem \ref{T1}, the conditions $(\I)$ and $(\II)$ are follows. Now assume that the conditions $(\I)$ and $(\II)$ are satisfied. If $\Lambda$ is brick-infinite, then by Proposition \ref{P1} and Theorem \ref{T1} the poset $(\tors \Lambda)_{\kappa}$ is infinite. We claim that in this case we have an infinite chain in $(\tors \Lambda)_{\kappa}$. If every chain in $(\tors \Lambda)_{\kappa}$ is finite, then we have an infinite antichain $U_1, U_2, \ldots\in (\tors \Lambda)_{\kappa}$. But $0\subset U_i$ and since any chain in $(\tors \Lambda)_{\kappa}$ is finite, there exists some $X\in (\tors \Lambda)_{\kappa}$ with infinitely many covers in $(\tors \Lambda)_{\kappa}$, which contradicts condition $(\II)$. Therefore there is an infinite chain $\mathcal{T}_1\lessdot_\kappa\mathcal{T}_2\lessdot_\kappa\mathcal{T}_3\lessdot_\kappa\cdots\in (\tors \Lambda)_{\kappa}$ and our claim follows. By Theorem \ref{T1}, for each $i$ there exists $\mathcal{W}_i\in \wide \Lambda$ that $\mathcal{T}_i=T(\mathcal{W}_i)$.
By the proof of Theorem 4.25 of \cite{E2}, for $\mathcal{W}_1, \mathcal{W}_2\in \wide \Lambda$, $T(\mathcal{W}_1)\lneq_\kappa T(\mathcal{W}_2)$ if and only if $\mathcal{W}_1\subsetneq \mathcal{W}_2$. Then we have an infinite chain
$$\mathcal{W}_1\subsetneq \mathcal{W}_2\subsetneq \mathcal{W}_3\subsetneq\cdots$$ of wide subcategories of $\modd \Lambda$, which gives a contradiction to the assumption $(I)$ and the result follows.
\end{proof}
	
\begin{theorem}\label{T4} There is an infinite semibrick $\mathcal{S}\in \sbrick \Lambda$ if and only if there is an infinite chain $$\mathcal{W}_1\subsetneq \mathcal{W}_2\subsetneq \mathcal{W}_3\subsetneq\cdots$$ of wide subcategories of $\modd \Lambda$.
\end{theorem}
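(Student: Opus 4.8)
**The plan is to prove both directions by relating infinite semibricks to infinite chains of wide subcategories via Ringel's bijection (Theorem~\ref{T5}).**

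The plan is to establish the two implications separately, using the bijection $\ssim \colon \wide \Lambda \rightleftarrows \sbrick \Lambda \colon \filt$ from Theorem~\ref{T5} as the central tool, since this bijection translates questions about semibricks into questions about wide subcategories and vice versa.

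For the implication that an infinite chain of wide subcategories forces an infinite semibrick, I would argue as follows. Suppose there is an infinite chain $\mathcal{W}_1 \subsetneq \mathcal{W}_2 \subsetneq \mathcal{W}_3 \subsetneq \cdots$. For each $i$, choose a module $X_{i+1} \in \mathcal{W}_{i+1} \setminus \mathcal{W}_i$. The idea is to extract from these witnesses a collection of simple objects (inside appropriate wide subcategories) that are pairwise Hom-orthogonal bricks. Concretely, since each $\mathcal{W}_i$ is itself an abelian length category, it has a finite set $\ssim(\mathcal{W}_i)$ of simple objects forming a semibrick by Theorem~\ref{T5}. Because the inclusions are strict, the sets $\ssim(\mathcal{W}_i)$ cannot stabilize: a strict inclusion $\mathcal{W}_i \subsetneq \mathcal{W}_{i+1}$ yields $\filt(\ssim(\mathcal{W}_i)) \subsetneq \filt(\ssim(\mathcal{W}_{i+1}))$, so the corresponding semibricks are distinct and, by the bijection, strictly different as sets. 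Passing to a subchain where the cardinalities $|\ssim(\mathcal{W}_i)|$ are strictly increasing (or where new simple bricks appear infinitely often) produces infinitely many distinct bricks; the point to verify carefully is that one can assemble infinitely many of these into a single semibrick, i.e.\ a pairwise Hom-orthogonal family, rather than merely producing infinitely many bricks.

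For the converse, suppose $\mathcal{S} = \{S_1, S_2, S_3, \ldots\}$ is an infinite semibrick. Set $\mathcal{S}_n := \{S_1, \ldots, S_n\}$; each finite subset of a semibrick is again a semibrick, so $\mathcal{S}_n \in \sbrick \Lambda$. Under the inverse bijection $\filt$, define $\mathcal{W}_n := \filt(\mathcal{S}_n)$, a wide subcategory of $\modd \Lambda$ with $\ssim(\mathcal{W}_n) = \mathcal{S}_n$. Since $\mathcal{S}_n \subsetneq \mathcal{S}_{n+1}$ and $\ssim$ is a bijection, we get $\mathcal{W}_n \neq \mathcal{W}_{n+1}$; moreover $\mathcal{S}_n \subseteq \mathcal{S}_{n+1}$ gives $\mathcal{W}_n = \filt(\mathcal{S}_n) \subseteq \filt(\mathcal{S}_{n+1}) = \mathcal{W}_{n+1}$, so the inclusions are strict. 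This yields the desired infinite chain $\mathcal{W}_1 \subsetneq \mathcal{W}_2 \subsetneq \cdots$ directly.

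**The main obstacle** I anticipate is the forward direction: producing an \emph{infinite semibrick} (a single pairwise Hom-orthogonal set) rather than merely infinitely many bricks. A naive choice of witnesses $X_i$ need not be pairwise Hom-orthogonal. The key idea to overcome this is to work with the simple objects $\ssim(\mathcal{W}_i)$, which are automatically bricks and Hom-orthogonal \emph{within each} $\mathcal{W}_i$, and then to track how new simples are created as the chain grows; one must argue that infinitely many genuinely new simple bricks arise and that a suitable infinite subfamily remains globally Hom-orthogonal, possibly by a diagonal or inductive selection exploiting that each $\ssim(\mathcal{W}_i)$ is finite while the union is infinite.
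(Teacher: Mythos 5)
Your second direction (infinite semibrick $\Rightarrow$ infinite chain) is correct and is exactly the paper's argument: apply $\filt$ to the nested finite truncations $\{S_1,\ldots,S_n\}$ and use the bijection of Theorem~\ref{T5} to get strictness. The problem is the other direction, where you have a genuine gap that you yourself flag but do not close. Your plan is to look at the simple objects $\ssim(\mathcal{W}_i)$ of the individual terms of the chain and to extract, by some ``diagonal or inductive selection,'' an infinite pairwise Hom-orthogonal subfamily. No such selection is given, and the obstruction is real: the sets $\ssim(\mathcal{W}_i)$ for different $i$ are in general not compatible, because a simple object of $\mathcal{W}_i$ typically fails to be simple in $\mathcal{W}_{i+1}$, and there can be nonzero (non-invertible) maps between simples of $\mathcal{W}_i$ and simples of $\mathcal{W}_{i+1}$; so the union $\bigcup_i \ssim(\mathcal{W}_i)$ is not a semibrick, and orthogonality across levels cannot be arranged just by passing to a subfamily of ``new'' simples. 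In addition, your assertion that each $\ssim(\mathcal{W}_i)$ is finite is false in general (e.g.\ the wide subcategory of regular modules over the Kronecker algebra has infinitely many simples); in the proof this is harmless only because if some $\ssim(\mathcal{W}_i)$ is infinite you are already done, but that reduction must be stated, not assumed.

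The missing idea, which is how the paper proceeds, is to work with the \emph{union} of the chain rather than with its individual terms: $\mathcal{W}=\bigcup_{i=1}^{\infty}\mathcal{W}_i$ is again a wide subcategory of $\modd \Lambda$ (any kernel, cokernel, or extension computation involves finitely many objects, hence takes place in some $\mathcal{W}_i$), so Theorem~\ref{T5} applies directly to $\mathcal{W}$ and $\ssim(\mathcal{W})$ is a semibrick. If $\ssim(\mathcal{W})=\{S_1,\ldots,S_n\}$ were finite, then all $S_k$ would lie in a single $\mathcal{W}_j$ (the chain is increasing), whence $\mathcal{W}=\filt(\ssim(\mathcal{W}))\subseteq \mathcal{W}_j$, contradicting $\mathcal{W}_j\subsetneq\mathcal{W}_{j+1}\subseteq\mathcal{W}$. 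This single maneuver replaces the entire selection argument you were hoping to find, and it sidesteps the cross-level orthogonality problem altogether because all the simples are taken in one wide subcategory.
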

\begin{proof} Let $\mathcal{S}=\{S_1, S_2, S_3, \ldots\}$ be an infinite semibrick in $\modd \Lambda$. Then by Theorem \ref{T5} we have an infinite chain $$\filt(\{S_1\})\subsetneq\filt(\{S_1, S_2\})\subsetneq\filt(\{S_1, S_2, S_3\})\subsetneq\cdots$$
of wide subcategories in $\modd \Lambda$. Now assume that there is an infinite chain $$\mathcal{W}_1\subsetneq \mathcal{W}_2\subsetneq \mathcal{W}_3\subsetneq\cdots$$ of wide subcategories of $\modd \Lambda$. Then $\mathcal{W}=\bigcup_{i=1}^{\infty}\mathcal{W}_i$ is a wide subcategory of $\modd \Lambda$. By Theorem \ref{T5}, $\ssim(\mathcal{W})$ is a semibrick in $\modd \Lambda$. We claim that $\ssim(\mathcal{W})$ is an infinite semibrick. Assume on a contrary that $\ssim(\mathcal{W})=\{S_1, S_2, \ldots, S_n\}$ for some $n\in \mathbb{N}$. Then $\{S_1, S_2, \ldots, S_n\}\subseteq \mathcal{W}=\bigcup_{i=1}^{\infty}\mathcal{W}_i$ and there exists $j\in \mathbb{N}$ that $\{S_1, S_2, \ldots, S_n\}\subseteq \mathcal{W}_j$. Therefore $\filt(\{S_1, S_2, \ldots, S_n\})=\filt(\ssim(\mathcal{W}))\subseteq \mathcal{W}_j$. By Theorem \ref{T5}, $\filt(\ssim(\mathcal{W}))=\mathcal{W}$. Thus $\mathcal{W}=\bigcup_{i=1}^{\infty}\mathcal{W}_i\subseteq \mathcal{W}_j$ which gives a contradiction. Hence $\ssim(\mathcal{W})$ is an infinite semibrick and the result follows.
\end{proof}

We need
the following construction from \cite{IT}. Let $\mathcal{T}\in \tors \Lambda$ and
$$\alpha(\mathcal{T}) := \{X\in \mathcal{T} | \forall(g : Y\rightarrow X)\in \mathcal{T}, \ker(g)\in \mathcal{T}\}.$$
By \cite[Proposition 2.12]{IT}, $\alpha(\mathcal{T})\in \wide \Lambda$. Dually, $\beta(\mathcal{F})$ is defined for any $\mathcal{F}\in \torf \Lambda$.

\begin{proposition}\label{P2}
		Let $\mathcal{T}\in \tors \Lambda$, $\mathcal{F}\in \torf \Lambda$ and $\mathcal{S}\in \sbrick \Lambda$. Then the following statements hold.
		\begin{enumerate}
			\item $\alpha(\mathcal{T})=\filt(\MCE(\mathcal{T}^\bot))$.
\item $\beta(\mathcal{F})=\filt(\ME(^\bot\mathcal{F}))$.
\item $\mathcal{T}$ is widely generated if and only if $\mathcal{T}=\mathrm{T}(\MCE(\mathcal{T}^\bot))$.
\item $\mathcal{F}$ is widely generated if and only if $\mathcal{F}=\mathrm{F}(\ME(^\bot\mathcal{F}))$.
\item $\MCE(\mathrm{T}(\mathcal{S})^\bot)=\mathcal{S}.$
\item  $\ME(^\bot\mathrm{F}(\mathcal{S}))=\mathcal{S}.$
		\end{enumerate}
	\end{proposition}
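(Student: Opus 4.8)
The plan is to prove part (1) directly and to deduce the other five statements from it, using the self-duality of the setup. Indeed, parts (2), (4), (6) are the images of (1), (3), (5) under the standard duality $D=\Hom_K(-,K)\colon\modd\Lambda\to\modd\Lambda^{\mathrm{op}}$, which exchanges torsion and torsion-free classes, the operators $\alpha$ and $\beta$, the operators $\mathrm{T}$ and $\mathrm{F}$, and the sets $\ME$ and $\MCE$; so it suffices to treat (1), (3) and (5). Recall that $M\in\MCE(\mathcal{F})$ means, dually to (P1)--(P3): (P1$'$) every proper submodule of $M$ lies in $\mathcal{F}$; (P2$'$) every non-split $0\to F\to X\to M\to 0$ with $F\in\mathcal{F}$ has $X\in\mathcal{F}$; (P3$'$) $\Hom_\Lambda(M,\mathcal{F})=0$.

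For (1), I would first note that both sides are wide: the left by \cite[Proposition 2.12]{IT}, the right because $\MCE(\mathcal{T}^\bot)$ is a semibrick by Lemma \ref{L1}(4), so $\filt(\MCE(\mathcal{T}^\bot))$ is wide by Theorem \ref{T5}. Since Theorem \ref{T5} also gives $\mathcal{W}=\filt(\ssim(\mathcal{W}))$ for every wide $\mathcal{W}$, it is enough to match simple objects, i.e. to show
$$\ssim(\alpha(\mathcal{T}))=\MCE(\mathcal{T}^\bot).$$
Throughout, $tX$ denotes the torsion submodule of $X$ for $(\mathcal{T},\mathcal{T}^\bot)$, so $tX\in\mathcal{T}$ and $X/tX\in\mathcal{T}^\bot$, and I use $\mathcal{T}\cap\mathcal{T}^\bot=0$ and ${}^\bot(\mathcal{T}^\bot)=\mathcal{T}$. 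The cornerstone is: for a simple object $S$ of $\alpha(\mathcal{T})$, every $g\colon Y\to S$ with $Y\in\mathcal{T}$ is zero or an epimorphism. Indeed $\Imm(g)$ is a quotient of $Y$, hence in $\mathcal{T}$, and in fact in $\alpha(\mathcal{T})$ (any $h\colon Z\to\Imm(g)$ with $Z\in\mathcal{T}$ has the same kernel as its composite $Z\to\Imm(g)\hookrightarrow S$, which lies in $\mathcal{T}$ as $S\in\alpha(\mathcal{T})$); thus $\Imm(g)\hookrightarrow S$ is a subobject of $S$ in the abelian category $\alpha(\mathcal{T})$, and simplicity forces $\Imm(g)=0$ or $S$.

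The inclusion $\MCE(\mathcal{T}^\bot)\subseteq\ssim(\alpha(\mathcal{T}))$ is the easier half: for $M\in\MCE(\mathcal{T}^\bot)$, (P3$'$) gives $M\in{}^\bot(\mathcal{T}^\bot)=\mathcal{T}$; a non-surjective $g\colon Y\to M$ with $Y\in\mathcal{T}$ has proper image, which lies in $\mathcal{T}\cap\mathcal{T}^\bot=0$ by (P1$'$), so $g=0$ and $\ker(g)=Y\in\mathcal{T}$, while a surjection $g$ is handled via (P2$'$) applied to $0\to K/tK\to Y/tK\to M\to 0$ (with $K=\ker g$), which must split and thereby forces $K\in\mathcal{T}$; hence $M\in\alpha(\mathcal{T})$, and (P1$'$) together with $\mathcal{T}\cap\mathcal{T}^\bot=0$ leaves $M$ no proper nonzero subobject in $\alpha(\mathcal{T})$, so $M$ is simple there. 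The reverse inclusion is where the real work lies, and it amounts to checking (P2$'$) for a simple $S$ of $\alpha(\mathcal{T})$; here (P3$'$) and (P1$'$) follow quickly from $S\in\mathcal{T}$ and from the cornerstone applied to inclusions $tS'\hookrightarrow S$ of torsion parts of proper submodules. For (P2$'$), given non-split $0\to F\to X\to S\to 0$ with $F\in\mathcal{T}^\bot$, I restrict $X\to S$ to $tX$: by the cornerstone this map is zero or onto. If zero, then $tX\subseteq F\in\mathcal{T}^\bot$ forces $tX=0$ and $X\in\mathcal{T}^\bot$. If onto, then since $tX\in\mathcal{T}$ maps onto $S\in\alpha(\mathcal{T})$ its kernel lies in $\mathcal{T}$ by the definition of $\alpha$, but this kernel is a submodule of $F$, hence also in $\mathcal{T}^\bot$; so it is $0$, the map $tX\to S$ is an isomorphism, and its inverse composed with $tX\hookrightarrow X$ splits the sequence, a contradiction. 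Thus only the first case occurs, establishing (P2$'$) and hence (1).

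Finally, for (3) and (5) I would invoke the known identity $\alpha(\mathrm{T}(\mathcal{W}))=\mathcal{W}$ for $\mathcal{W}\in\wide\Lambda$ (Ingalls--Thomas \cite{IT}, Marks--\v{S}\v{t}ov\'i\v{c}ek \cite{MS}). For (5): with $\mathcal{W}=\filt(\mathcal{S})$ one has $\mathrm{T}(\mathcal{S})=\mathrm{T}(\mathcal{W})$, so the simple-object identity of (1) applied to $\mathcal{T}=\mathrm{T}(\mathcal{S})$ gives $\MCE(\mathrm{T}(\mathcal{S})^\bot)=\ssim(\alpha(\mathrm{T}(\mathcal{W})))=\ssim(\mathcal{W})=\mathcal{S}$. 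For (3), the implication $\Leftarrow$ is immediate since then $\mathcal{T}=\mathrm{T}(\filt(\MCE(\mathcal{T}^\bot)))$ exhibits $\mathcal{T}$ as widely generated; conversely if $\mathcal{T}=\mathrm{T}(\mathcal{W})$, put $\mathcal{S}=\ssim(\mathcal{W})$ so $\mathcal{T}=\mathrm{T}(\mathcal{S})$, and (5) yields $\mathrm{T}(\MCE(\mathcal{T}^\bot))=\mathrm{T}(\mathcal{S})=\mathcal{T}$. Parts (2), (4), (6) then follow by applying $D$. I expect the onto case of (P2$'$) --- identifying $tX\to S$ as an isomorphism and thereby producing a splitting --- to be the single genuinely delicate point; everything else is bookkeeping with the torsion pair and Ringel's correspondence.
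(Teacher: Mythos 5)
Your proposal is correct, and at the structural level it follows the same skeleton as the paper: both arguments reduce all six statements to the single identity $\ssim(\alpha(\mathcal{T}))=\MCE(\mathcal{T}^\bot)$ (and its dual), combined with Ringel's bijection (Theorem \ref{T5}) and the identity $\alpha(\mathrm{T}(\mathcal{W}))=\mathcal{W}$ for $\mathcal{W}\in\wide\Lambda$. The difference lies in what is proved versus what is cited. The paper's proof is essentially a citation chain: the simple-object identity is quoted from \cite[Proposition 3.10]{AS}, the dual facts from \cite[Proposition 3.7]{EN}, part (3) from \cite[Theorem 7.2]{AP}, and the identity $\filt(\mathcal{S})=\alpha(\mathrm{T}(\mathcal{S}))$ from \cite[Proposition 3.3]{MS}. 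You instead prove the key identity from first principles, and your argument checks out, including the two genuinely delicate points: in the easy inclusion, the split/non-split dichotomy for $0\to K/tK\to Y/tK\to M\to 0$ correctly forces $K\in\mathcal{T}$ (non-splitting would give $Y/tK\in\mathcal{T}\cap\mathcal{T}^\bot=0$, contradicting $M\neq 0$; splitting makes $K/tK$ a summand of $Y/tK\in\mathcal{T}$, hence zero); and in (P2$'$), when $tX\to S$ is onto its kernel lies in $\mathcal{T}$ by definition of $\alpha$ and in $\mathcal{T}^\bot$ as a submodule of $F$, hence vanishes, so $tX\xrightarrow{\sim} S$ splits the sequence. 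You also obtain (2), (4), (6) by the standard duality $D=\Hom_K(-,K)$ applied to $\Lambda^{\mathrm{op}}$ rather than by citing dual results, and you derive (3) from (5) rather than from \cite[Theorem 7.2]{AP}, which is a cleaner internal deduction. What your route buys is self-containedness: modulo Theorem \ref{T5} and $\alpha(\mathrm{T}(\mathcal{W}))=\mathcal{W}$ (\cite{IT}, \cite[Proposition 3.3]{MS}), everything is elementary torsion-pair bookkeeping; what the paper's route buys is brevity, at the cost of outsourcing all technical content to \cite{AS}, \cite{EN}, and \cite{AP}.
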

\begin{proof}$($1$)$ By \cite[Proposition 3.10]{AS}, $\ssim(\alpha(\mathcal{T}))=\MCE(\mathcal{T}^\bot)$. Since $\alpha(\mathcal{T})\in \wide \Lambda$, then by Theorem \ref{T5}, $\alpha(\mathcal{T})=\filt(\ssim(\alpha(\mathcal{T})))=\filt(\MCE(\mathcal{T}^\bot))$.\

$($2$)$ By \cite[Proposition 3.10]{AS}, $\ssim(\beta(\mathcal{F}))=\ME(^\bot\mathcal{F})$. Since $\beta(\mathcal{F})\in \wide \Lambda$, then by Theorem \ref{T5}, $\beta(\mathcal{F})=\filt(\ssim(\beta(\mathcal{F})))=\filt(\ME(^\bot\mathcal{F}))$.\

$($3$)$ follows from $($1$)$, \cite[Theorem 7.2]{AP}, Lemma \ref{L1} and the fact that for any $\mathcal{S}\in \sbrick \Lambda$, $\mathrm{T}(\mathcal{S})=\mathrm{T}(\filt(\mathcal{S}))$.

$($4$)$ follows from $($2$)$,  \cite[Proposition 3.7]{EN}, Lemma \ref{L1} and the fact that for any $\mathcal{S}\in \sbrick \Lambda$, $\mathrm{F}(\mathcal{S})=\mathrm{F}(\filt(\mathcal{S}))$.

$($5$)$ Since $\mathrm{T}(\mathcal{S})=\mathrm{T}(\filt(\mathcal{S}))$ is a widely generated torsion class, then by \cite[Proposition 3.3]{MS}, $\filt(\mathcal{S})=\alpha(\mathrm{T}(\mathcal{S}))$. Therefore by \cite[Proposition 3.10]{AS}, $\ssim(\filt(\mathcal{S}))=\MCE(\mathrm{T}(\mathcal{S})^\bot)$. By Theorem \ref{T5}, $\ssim(\filt(\mathcal{S}))=\mathcal{S}$ and the result follows.

$($6$)$ By Theorem \ref{T5}, $\filt(\mathcal{S})\in \wide \Lambda$. Then by \cite[Proposition 3.7]{EN},  $\filt(\mathcal{S})=\beta(\mathrm{F}(\mathcal{S}))$. Therefore by \cite[Proposition 3.10]{AS}, $\ssim(\filt(\mathcal{S}))=\ME(^\bot\mathrm{F}(\mathcal{S}))$. By Theorem \ref{T5}, $\ssim(\filt(\mathcal{S}))=\mathcal{S}$ and the result follows.
\end{proof}

The subsequent result is requisite for the proof of the forthcoming Theorem, which possesses intrinsic significance.

\begin{theorem}\label{T6} We have the following bijections between widely generated torsion(-free) classes and semibricks in $\modd \Lambda$:

            \centering

\begin{tikzpicture}[node distance=1.5cm]

    \node (A) {$\tors_w \Lambda$};
    \node[right=of A] (B) {$\sbrick \Lambda$,};

    \draw[->, thick, black] ([yshift=1.5mm]A.east) -- node[above] {$\MCE(-^\bot)$} ([yshift=1.5mm]B.west);
    \draw[->, thick,black] ([yshift=-1.5mm]B.west) -- node[below] {$\mathrm{T}$} ([yshift=-1.5mm]A.east);
\end{tikzpicture}

            \centering

\begin{tikzpicture}[node distance=1.5cm]
    \node (A) {$\torf_w \Lambda$};
    \node[right=of A] (B) {$\sbrick \Lambda$.};

    \draw[->, thick, black] ([yshift=1.5mm]A.east) -- node[above] {$\ME(^\bot-)$} ([yshift=1.5mm]B.west);
    \draw[->, thick,black] ([yshift=-1.5mm]B.west) -- node[below] {$\mathrm{F}$} ([yshift=-1.5mm]A.east);
\end{tikzpicture}

\end{theorem}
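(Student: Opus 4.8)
The plan is to read off both bijections directly from Proposition \ref{P2}, which already isolates the two round-trip identities. I would treat the first pair of maps in full and then obtain the second by the dual argument.

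First I would check that the four maps are well-defined. For $\mathcal{T}\in\tors_w\Lambda$ the set $\mathcal{T}^\bot$ is a torsion-free class, so Lemma \ref{L1}(4) guarantees that $\MCE(\mathcal{T}^\bot)$ is a semibrick; hence $\MCE(-^\bot)$ lands in $\sbrick\Lambda$. In the other direction, for $\mathcal{S}\in\sbrick\Lambda$ Theorem \ref{T5} shows that $\filt(\mathcal{S})$ is a wide subcategory, and since $\mathrm{T}(\mathcal{S})=\mathrm{T}(\filt(\mathcal{S}))$ this exhibits $\mathrm{T}(\mathcal{S})$ as the smallest torsion class containing a wide subcategory, that is, $\mathrm{T}(\mathcal{S})\in\tors_w\Lambda$. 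So $\mathrm{T}$ lands in $\tors_w\Lambda$. This step — verifying that $\mathrm{T}$ (and later $\mathrm{F}$) has image in the claimed set — is the only place where a genuine, if small, ingredient is needed, namely the wideness of $\filt(\mathcal{S})$; everything else is bookkeeping.

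Next I would verify that the two composites are identities. Starting from a semibrick $\mathcal{S}$, Proposition \ref{P2}(5) gives $\MCE(\mathrm{T}(\mathcal{S})^\bot)=\mathcal{S}$, so $\MCE(-^\bot)\circ\mathrm{T}=\id_{\sbrick\Lambda}$. Starting from $\mathcal{T}\in\tors_w\Lambda$, Proposition \ref{P2}(3) applies precisely because $\mathcal{T}$ is widely generated and yields $\mathrm{T}(\MCE(\mathcal{T}^\bot))=\mathcal{T}$, so $\mathrm{T}\circ\MCE(-^\bot)=\id_{\tors_w\Lambda}$. Two mutually inverse maps produce the first bijection.

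The second bijection is obtained by the same argument with $\mathcal{T}^\bot$ replaced by $^\bot\mathcal{F}$, $\MCE$ by $\ME$, and $\mathrm{T}$ by $\mathrm{F}$: well-definedness uses Lemma \ref{L1}(3) together with the identity $\mathrm{F}(\mathcal{S})=\mathrm{F}(\filt(\mathcal{S}))$ and $\filt(\mathcal{S})\in\wide\Lambda$, while the two round-trip identities are exactly Proposition \ref{P2}(6) and (4). I expect no real obstacle at the level of this theorem, since the actual content was already packaged into Proposition \ref{P2} — in particular the nontrivial facts $\filt(\mathcal{S})=\alpha(\mathrm{T}(\mathcal{S}))$ and $\filt(\mathcal{S})=\beta(\mathrm{F}(\mathcal{S}))$ drawn from \cite{MS} and \cite{EN}. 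The only care required is to confirm that each map has image in the claimed set before invoking the four identities.
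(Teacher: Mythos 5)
Your proposal is correct and follows essentially the same route as the paper: well-definedness of $\MCE(-^\bot)$ via Lemma \ref{L1}, well-definedness of $\mathrm{T}$ via Theorem \ref{T5} together with $\mathrm{T}(\mathcal{S})=\mathrm{T}(\filt(\mathcal{S}))$, the two round-trip identities from Proposition \ref{P2}(3),(5) (resp.\ (4),(6)), and the second bijection by duality. The paper's own proof is exactly this argument, stated slightly more tersely.
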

\begin{proof} Let $\mathcal{T}\in \tors_w \Lambda$. Then by Lemma \ref{L1}, $\MCE(\mathcal{T}^\bot)\in \sbrick \Lambda$ and the map $\MCE(-^\bot)$ is well-defined. Now let $\mathcal{S}\in \sbrick \Lambda$. $\mathrm{T}(\mathcal{S})=\mathrm{T}(\filt(\mathcal{S}))$ and by Theorem \ref{T5}, $\filt(\mathcal{S})$ is a wide subcategory of $\modd \Lambda$. Therefore $\mathrm{T}(\mathcal{S})\in \tors_w \Lambda$ and the map $\mathrm{T}$ is well-defined. Proposition \ref{P2} shows that $\MCE(-^\bot)$ and $\mathrm{T}$ are mutually inverse to each other and the result follows. The proof of the second bijection is similar.
\end{proof}

We denote the set of all finitely generated torsion classes in $\modd \Lambda$ by $\fg-tors \Lambda$. Ringel in \cite[Theorem 2.2]{R1} proved that the map $\mathcal{S}=\{S_1, \ldots, S_n\}\mapsto \mathrm{T}(S_1\oplus\ldots\oplus S_n)$ provides a bijection between the set of finite semibricks and the set of finitely generated torsion
classes in $\modd \Lambda$. In the following result, we complete this picture.

We denote by $\fin-sbrick \Lambda$ the set of all finite semibricks in $\modd \Lambda$.

\begin{corollary}\label{T11} Let $\Lambda$ be a finite-dimensional $K$-algebra. Then the following diagram commutes, and the horizontal maps are bijections.

$$\begin{tikzpicture}[hookarrow/.style={{Hooks[right]}->},
node distance=1.5cm]
    \node (A) {$\tors_w \Lambda$};
    \node[right=of A] (B) {$\sbrick \Lambda$};
        \node[below=of A] (A') {$\fg-tors \Lambda$};
    \node[below=of B] (B') {$\fin-sbrick \Lambda.$};
    \draw[->, thick, black] ([yshift=1.5mm]A.east) -- node[above] {$\MCE(-^\bot)$} ([yshift=1.5mm]B.west);
    \draw[->, thick, black] ([yshift=-1.5mm]B.west) -- node[below] {$\mathrm{T}$} ([yshift=-1.5mm]A.east);
        \draw[->, thick, black] ([yshift=2mm]A'.east) -- node[above] {$\MCE(-^\bot)$} ([yshift=1.5mm]B'.west);
    \draw[->, thick, black] ([yshift=-1.5mm]B'.west) -- node[below] {$\mathrm{T}$} ([yshift=-1.mm]A'.east);
    \draw[hookarrow] (A')--(A);
      \draw[hookarrow] (B')--(B);
\end{tikzpicture}$$
\end{corollary}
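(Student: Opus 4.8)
The plan is to obtain the bottom row as the restriction of the top-row bijection of Theorem~\ref{T6} to the distinguished subsets $\fg-tors \Lambda\subseteq\tors_w \Lambda$ and $\fin-sbrick \Lambda\subseteq\sbrick \Lambda$, and then to read off both the bijectivity of the bottom maps and the commutativity of the square at once. Accordingly, the first task is to justify the two vertical inclusions. The inclusion $\fin-sbrick \Lambda\hookrightarrow\sbrick \Lambda$ is tautological. For $\fg-tors \Lambda\hookrightarrow\tors_w \Lambda$, I would take $\mathcal{T}\in\fg-tors \Lambda$; by \cite[Theorem 2.2]{R1} we may write $\mathcal{T}=\mathrm{T}(\mathcal{S})$ for a finite semibrick $\mathcal{S}=\{S_1,\ldots,S_n\}$, and since $\mathrm{T}(\mathcal{S})=\mathrm{T}(\filt(\mathcal{S}))$ with $\filt(\mathcal{S})\in\wide \Lambda$ by Theorem~\ref{T5}, the class $\mathcal{T}$ is widely generated. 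Hence $\fg-tors \Lambda\subseteq\tors_w \Lambda$, and the left vertical map is a genuine inclusion.

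Next I would verify that the two horizontal maps of the top row carry each bottom subset into the other. If $\mathcal{S}=\{S_1,\ldots,S_n\}\in\fin-sbrick \Lambda$, then $\mathrm{T}(\mathcal{S})=\mathrm{T}(S_1\oplus\cdots\oplus S_n)$ is finitely generated, so $\mathrm{T}$ sends $\fin-sbrick \Lambda$ into $\fg-tors \Lambda$. Conversely, if $\mathcal{T}\in\fg-tors \Lambda$, write $\mathcal{T}=\mathrm{T}(\mathcal{S})$ with $\mathcal{S}$ a finite semibrick as above; Proposition~\ref{P2}(5) then gives $\MCE(\mathcal{T}^\bot)=\MCE(\mathrm{T}(\mathcal{S})^\bot)=\mathcal{S}$, which is finite, so $\MCE(-^\bot)$ sends $\fg-tors \Lambda$ into $\fin-sbrick \Lambda$.

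It then remains to assemble these facts formally. Since $\MCE(-^\bot)$ and $\mathrm{T}$ are mutually inverse bijections on the top row (Theorem~\ref{T6}) and each carries the relevant subset into the other, combining the two inclusions of the previous paragraph with the identities $\mathrm{T}\circ\MCE(-^\bot)=\mathrm{id}$ and $\MCE(-^\bot)\circ\mathrm{T}=\mathrm{id}$ shows that for every $\mathcal{T}\in\fg-tors \Lambda$ the element $\MCE(\mathcal{T}^\bot)$ lies in $\fin-sbrick \Lambda$ and satisfies $\mathrm{T}(\MCE(\mathcal{T}^\bot))=\mathcal{T}$, and symmetrically for $\mathcal{S}\in\fin-sbrick \Lambda$; thus the two restrictions are surjective onto one another, while injectivity is inherited from the top row, so they are mutually inverse bijections. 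The square then commutes on the nose, because the bottom horizontal maps are by construction the restrictions of the top ones and the vertical maps are inclusions. I do not anticipate a serious obstacle here: the only genuine input is the identification of $\fg-tors \Lambda$ with the $\mathrm{T}$-image of finite semibricks, namely Ringel's \cite[Theorem 2.2]{R1} together with the computation $\MCE(\mathrm{T}(\mathcal{S})^\bot)=\mathcal{S}$ from Proposition~\ref{P2}(5), and everything else is bookkeeping.
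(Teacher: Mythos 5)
Your proof is correct and takes essentially the same approach as the paper's: both obtain the bottom row by restricting the bijection of Theorem~\ref{T6} to the finitely generated torsion classes, using Ringel's \cite[Theorem 2.2]{R1} to identify that subset with the image of the finite semibricks, after which commutativity and bijectivity are automatic. The only (harmless) difference is in how the inclusion of finitely generated torsion classes into widely generated ones is justified: you deduce it directly from the surjectivity in \cite[Theorem 2.2]{R1} together with $\mathrm{T}(\mathcal{S})=\mathrm{T}(\filt(\mathcal{S}))$ and Theorem~\ref{T5}, whereas the paper cites \cite[Theorem 7.2]{AP} and \cite[Addendum of Theorem 2.2 and Theorem 2.5]{R1} for the same fact.
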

\begin{proof} By \cite[Theorem 7.2]{AP} and \cite[Addendum of Theorem 2.2 and Theorem 2.5]{R1} any finitely generated torsion class is widely generated. Then the result follows from Theorem \ref{T6} and \cite[Theorem 2.2]{R1}.
\end{proof}

Corollary \ref{T11} shows that every semibrick in $\modd \Lambda$ is finite if and only if every widely generated torsion class in $\modd \Lambda$ is finitely generated.\\

Now we are ready to provide another equivalent condition for the existence of an infinite semibrick in $\modd \Lambda$.

\begin{theorem}\label{T7} There is an infinite semibrick $\mathcal{S}\in \sbrick \Lambda$ if and only if there is a torsion class $\mathcal{T}\in \tors \Lambda$ and infinitely many torsion classes $\mathcal{T}_1, \mathcal{T}_2, \mathcal{T}_3, \ldots$ in $\tors \Lambda$ that for any $i\neq j$, $\mathcal{T}_i\neq \mathcal{T}_j$ and $\mathcal{T}\lessdot \mathcal{T}_i$ for each $i$.
\end{theorem}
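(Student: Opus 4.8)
The plan is to prove both implications by transporting the question to minimal extending modules, where Theorem~\ref{T2} converts covers into a counting statement and Lemma~\ref{L1} supplies the semibrick structure for free.

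For the implication from covers to semibricks, suppose $\mathcal{T}\in\tors\Lambda$ admits infinitely many pairwise distinct upper covers $\mathcal{T}_1,\mathcal{T}_2,\ldots$ with $\mathcal{T}\lessdot\mathcal{T}_i$. By Theorem~\ref{T2} the assignment $\eta_{\mathcal{T}}\colon M\mapsto\filt(\mathcal{T}\cup\{M\})$ is a bijection from $\ME(\mathcal{T})$ onto the set of torsion classes covering $\mathcal{T}$, so each $\mathcal{T}_i$ equals $\eta_{\mathcal{T}}(M_i)$ for a unique $M_i\in\ME(\mathcal{T})$, and the $M_i$ are pairwise non-isomorphic because $\eta_{\mathcal{T}}$ is injective. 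Since Lemma~\ref{L1}(3) asserts that the set of isomorphism classes of minimal extending modules for $\mathcal{T}$ forms a semibrick, $\{M_1,M_2,\ldots\}\subseteq\ME(\mathcal{T})$ is an infinite semibrick in $\modd\Lambda$.

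For the converse, the key step is to exhibit, from a given infinite semibrick $\mathcal{S}=\{S_1,S_2,\ldots\}$, a single torsion class whose minimal extending modules are exactly $\mathcal{S}$. I would take $\mathcal{T}:={}^\bot\mathrm{F}(\mathcal{S})$, which lies in $\tors\Lambda$ because $\mathrm{F}(\mathcal{S})$ is a torsion-free class and $^\bot(-)$ sends torsion-free classes to torsion classes. Proposition~\ref{P2}(6) then gives $\ME(\mathcal{T})=\ME({}^\bot\mathrm{F}(\mathcal{S}))=\mathcal{S}$, so $\ME(\mathcal{T})$ is infinite, and applying the bijection $\eta_{\mathcal{T}}$ of Theorem~\ref{T2} in the forward direction produces infinitely many pairwise distinct torsion classes $\mathcal{T}_i:=\filt(\mathcal{T}\cup\{S_i\})$, each covering $\mathcal{T}$; this is precisely the configuration demanded by the statement.

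The only genuine obstacle is bookkeeping about which covers appear. Theorem~\ref{T7} requires a torsion class with infinitely many \emph{upper} covers, whereas the naive route through Proposition~\ref{P2}(5) together with the anti-isomorphism $(-)^\bot$ would instead produce a torsion-free class with infinitely many covers, equivalently a torsion class with infinitely many \emph{lower} covers, which is the wrong direction. Choosing $\mathcal{T}={}^\bot\mathrm{F}(\mathcal{S})$ and invoking the \emph{extending} version (Proposition~\ref{P2}(6)) rather than the co-extending version is exactly what aligns the construction with upper covers; once the correct torsion class is pinned down, both implications follow immediately from Theorem~\ref{T2} and Lemma~\ref{L1}.
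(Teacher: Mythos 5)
Your proof is correct and follows essentially the same route as the paper: the forward direction is exactly the paper's combination of Theorem~\ref{T2} with Lemma~\ref{L1}(3), and for the converse you pick $\mathcal{T}={}^\bot\mathrm{F}(\mathcal{S})$ and use Proposition~\ref{P2}(6) (the paper cites Theorem~\ref{T6}, whose inverse-bijection statement is precisely this identity $\ME({}^\bot\mathrm{F}(\mathcal{S}))=\mathcal{S}$) before applying Theorem~\ref{T2} again. Your closing remark about why the extending (rather than co-extending) version is the one that produces upper covers matches the paper's choice as well.
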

\begin{proof} Assume that there is a torsion class $\mathcal{T}\in \tors \Lambda$ and infinitely many torsion classes $\mathcal{T}_1, \mathcal{T}_2, \mathcal{T}_3, \ldots$ in $\tors \Lambda$ that for any $i\neq j$, $\mathcal{T}_i\neq \mathcal{T}_j$ and $\mathcal{T}\lessdot \mathcal{T}_i$ for each $i$. Then by Theorem \ref{T2} and Lemma \ref{L1}, $\ME(\mathcal{T})$ is an infinite semibrick and the result follows. Now assume that we have an infinite semibrick $\mathcal{S}\in \sbrick \Lambda$. By Theorem \ref{T6}, $\mathcal{S}=\ME(^\bot\mathrm{F}(\mathcal{S}))$. Therefore by Theorem \ref{T2}, there are infinitely many torsion classes $\mathcal{T}_1, \mathcal{T}_2, \mathcal{T}_3, \ldots$ in $\tors \Lambda$ that for any $i\neq j$, $\mathcal{T}_i\neq \mathcal{T}_j$ and $^\bot\mathrm{F}(\mathcal{S})\lessdot \mathcal{T}_i$ for each $i$ and the result follows.
\end{proof}

By the above theorem and Theorem \ref{T4}, we have the following corollary.

\begin{corollary}\label{T9} Let $\Lambda$ be a finite-dimensional $K$-algebra. Then the following are equivalent.
\begin{itemize}
\item[(1)] Every semibrick in $\modd \Lambda$ is a finite set.
\item[(2)] Any chain of wide subcategories of $\modd \Lambda$ becomes eventually constant.
\item[(3)] For any torsion class $\mathcal{T}\in(\tors \Lambda)$ there are finitely many $\mathcal{T}_i\in(\tors \Lambda)$ that $\mathcal{T}\lessdot\mathcal{T}_i$.
\end{itemize}
\end{corollary}	

Enomoto in \cite[Proposition 5.1]{E1} (see also \cite[Proposition 3.3]{MS}) proved that the map $\mathrm{F}:\wide \Lambda\hookrightarrow \torf \Lambda$, where $\mathrm{F}(\mathcal{W})$ is the smallest torsion-free class containing $\mathcal{W}$, is injective.
He also in \cite[Theorem 5.11]{E1} proved that $\Lambda$ is brick-finite if and only if every semibrick in $\modd \Lambda$ is a finite set, and the map $\mathrm{F}:\wide \Lambda\hookrightarrow \torf \Lambda$ is surjective.

\begin{corollary}\label{C2} Let $\Lambda$ be a finite-dimensional $K$-algebra. Then the following are equivalent.
\begin{itemize}
\item[(1)] $\Lambda$ is brick-finite.
\item[(2)] Every torsion-free class in $\modd \Lambda$ is widely generated.
\item[(3)] Every torsion class in $\modd \Lambda$ is widely generated.
\item[(4)] Every torsion class in $\modd \Lambda$ is finitely generated.
\end{itemize}
\end{corollary}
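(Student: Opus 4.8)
The plan is to reduce all four conditions to the two characterisations of brick-finiteness already in hand: Corollary \ref{T9} (brick-finite $\Leftrightarrow$ $\mathrm{F}:\wide\Lambda\hookrightarrow\torf\Lambda$ is surjective) and Theorem \ref{T8} together with the remark following Corollary \ref{T11} (brick-finite $\Leftrightarrow$ every semibrick is finite $\Leftrightarrow$ every widely generated torsion class is finitely generated). Throughout I would exploit the inclusions $\fg-tors\Lambda\subseteq\tors_w\Lambda\subseteq\tors\Lambda$ supplied by Corollary \ref{T11} (a finitely generated torsion class is widely generated).

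First I would dispatch $(1)\Leftrightarrow(2)$. By the very definition of a widely generated torsion-free class, $\torf_w\Lambda$ is exactly the image of the map $\mathrm{F}:\wide\Lambda\to\torf\Lambda$, so condition $(2)$, which asserts $\torf_w\Lambda=\torf\Lambda$, is literally the surjectivity of $\mathrm{F}$. Hence $(1)\Leftrightarrow(2)$ is precisely Corollary \ref{T9}, with nothing further to prove.

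The key and least symmetric-looking point is $(1)\Leftrightarrow(3)$, and the cleanest route I would take is to transport the already-settled equivalence $(1)\Leftrightarrow(2)$ across the standard $K$-duality $D=\Hom_K(-,K):\modd\Lambda\to\modd\Lambda^{op}$. This contravariant exact equivalence carries epimorphisms to monomorphisms and kernels to cokernels, so it sends torsion classes of $\modd\Lambda$ to torsion-free classes of $\modd\Lambda^{op}$, sends wide subcategories to wide subcategories, and identifies $\mathrm{T}_\Lambda(X)$ with $\mathrm{F}_{\Lambda^{op}}(DX)$; consequently it matches the widely generated torsion classes of $\modd\Lambda$ with the widely generated torsion-free classes of $\modd\Lambda^{op}$. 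Therefore condition $(3)$ for $\Lambda$ is equivalent to condition $(2)$ for $\Lambda^{op}$, which by Corollary \ref{T9} applied to $\Lambda^{op}$ is equivalent to $\Lambda^{op}$ being brick-finite, and this is equivalent to $\Lambda$ being brick-finite because $D$ restricts to a bijection between the bricks of $\Lambda$ and those of $\Lambda^{op}$, so brick-finiteness is manifestly left--right symmetric. The main thing to be careful about here is that Corollary \ref{T9} (and the results behind it) are stated for an arbitrary finite-dimensional algebra, so that they may legitimately be applied verbatim to $\Lambda^{op}$; granting this, the duality converts the genuinely asymmetric torsion-class statement $(3)$ into the torsion-free statement $(2)$ already proved.

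It remains to insert $(4)$. For $(4)\Rightarrow(1)$ I would observe that if every torsion class is finitely generated then in particular every widely generated torsion class is finitely generated, so by the remark after Corollary \ref{T11} every semibrick is finite, whence $\Lambda$ is brick-finite by Theorem \ref{T8}. For the converse $(1)\Rightarrow(4)$ I would combine the two facts already established from $(1)$: on the one hand brick-finiteness gives, via Theorem \ref{T8} and the remark after Corollary \ref{T11}, that every widely generated torsion class is finitely generated; on the other hand the equivalence $(1)\Leftrightarrow(3)$ just proved shows that \emph{every} torsion class is widely generated. Putting these together, every torsion class is widely generated and every such class is finitely generated, so every torsion class is finitely generated, which is $(4)$. (In passing this also records $(4)\Rightarrow(3)$ directly, since $\fg-tors\Lambda\subseteq\tors_w\Lambda$.) This closes the equivalences, the only genuinely new ingredient being the op-duality used to handle $(3)$.
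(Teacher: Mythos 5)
Your proof is correct, but it reaches the torsion-class statements by a genuinely different route than the paper. The paper's proof is essentially a list of citations: $(1)\Leftrightarrow(2)$ via Corollary \ref{T9}, $(1)\Rightarrow(3)$ via \cite[Corollary 3.11]{MS}, $(3)\Rightarrow(1)$ via \cite[Proposition 5.4]{AS}, and $(1)\Rightarrow(4)$, $(4)\Rightarrow(3)$ via Corollary \ref{T11}. You instead handle the torsion-class condition $(3)$ internally: you transport the already-established torsion-free equivalence $(1)\Leftrightarrow(2)$ across the standard duality $D=\Hom_K(-,K):\modd\Lambda\to\modd\Lambda^{op}$, checking that $D$ exchanges torsion and torsion-free classes, preserves wideness, identifies $\mathrm{T}_\Lambda(X)$ with $\mathrm{F}_{\Lambda^{op}}(DX)$ (hence matches widely generated classes on both sides), and that brick-finiteness is left--right symmetric; then $(3)$ for $\Lambda$ becomes $(2)$ for $\Lambda^{op}$, and Corollary \ref{T9} applied to $\Lambda^{op}$ finishes it. All of these duality checks are routine and valid, and your point of caution --- that Corollary \ref{T9} holds for arbitrary finite-dimensional algebras and so applies to $\Lambda^{op}$ --- is exactly the right one. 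Your treatment of $(4)$ (brick-finite plus ``every torsion class is widely generated'' plus the remark after Corollary \ref{T11} gives $(1)\Rightarrow(4)$; conversely $(4)$ forces every widely generated torsion class to be finitely generated, hence every semibrick finite, hence $(1)$ by Theorem \ref{T8}) is logically the same mechanism the paper's terse citation of Corollary \ref{T11} encodes, just spelled out. What each approach buys: the paper's version is shorter because it leans on the external results of Marks--\v{S}\v{t}ov\'{\i}\v{c}ek and Angeleri H\"ugel--Sentieri for the torsion-class side, whereas yours is more self-contained --- it needs only the paper's own earlier results plus the elementary op-duality, at the cost of verifying that all the relevant notions dualize correctly.
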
	
\begin{proof} $($1$)$$\Leftrightarrow$$($2$)$ follows from \cite[Proposition 5.4]{AS}.

$($1$)$$\Rightarrow$$($3$)$ follows from \cite[Corollary 3.11]{MS}.

$($3$)$$\Rightarrow$$($1$)$ follows from \cite[Proposition 5.4]{AS}.

$($1$)$$\Rightarrow$$($4$)$ and $($4$)$$\Rightarrow$$($3$)$ follows from Corollary \ref{T11}.
\end{proof}

Now we can provide an equivalent condition for brick-finiteness.

\begin{theorem}\label{T8} Let $\Lambda$ be a finite-dimensional $K$-algebra. Then the following are equivalent.
\begin{itemize}
\item[(1)] $\Lambda$ is brick-finite.
\item[(2)] For any torsion class $\mathcal{T}\in(\tors \Lambda)_{\kappa}$ there are finitely many $\mathcal{T}_i\in(\tors \Lambda)_{\kappa}$ that $\mathcal{T}\lessdot_\kappa\mathcal{T}_i$.
\item[(3)] There are finitely many $\mathcal{T}_i\in(\tors \Lambda)_{\kappa}$ that $0\lessdot_\kappa\mathcal{T}_i$.  
\end{itemize}
\end{theorem}	
\begin{proof} $($1$)$$\Rightarrow$$($2$)$ follows from Theorem \ref{T3}.

$($2$)$$\Rightarrow$$($3$)$ is obvious.

$($3$)$$\Rightarrow$$($1$).$ For each brick $B\in \modd \Lambda$ there is a wide subcategory $\mathcal{W}_B=\filt(B)$ with only one simple object, and no proper non-zero subcategory of $\mathcal{W}_B$ is again wide in $\modd \Lambda$. Therefore for any brick $B$ we have $0\lessdot_\kappa\mathrm{T}(\mathcal{W}_B)$. If $\Lambda$ is brick-infinite, there are infinitely many bricks $B_i\in \modd \Lambda$ and infinitely many torsion classes $\mathcal{T}_i=\mathrm{T}(\mathcal{W}_{B_i})\in(\tors \Lambda)_{\kappa}$ that $0\lessdot_\kappa\mathcal{T}_i$, which gives a contradiction. Therefore $\Lambda$ is brick-finite and the result follows.
\end{proof}

\begin{theorem}\label{T12} Let $\Lambda$ be a finite-dimensional $K$-algebra. Then the following are equivalent.
\begin{itemize}
\item[(1)] $\Lambda$ is brick-finite.
\item[(2)] Any chain of torsion classes of $\modd \Lambda$ becomes eventually constant.
\item[(3)] Any chain of widely generated torsion classes of $\modd \Lambda$ becomes eventually constant.
\end{itemize}
\end{theorem}	
\begin{proof} $($1$)$$\Rightarrow$$($2$)$ follows from \cite[Theorem 3.8 and Lemma 3.10]{DIJ}.

$($2$)$$\Rightarrow$$($3$)$ is obvious.

$($3$)$$\Rightarrow$$($1$).$ If $\Lambda$ is not brick-finite, then by Corollary \ref{C2} there is a torsion class $\mathcal{T}$ which is not finitely generated. Let $M_1\in \mathcal{T}$, we have $\mathrm{T}(M_1)\subsetneq \mathcal{T}$. Now let  $M_2\in\mathcal{T}\setminus\mathrm{T}(M_1)$. Then we have $\mathrm{T}(M_1)\subsetneq \mathrm{T}(M_1, M_2)\subsetneq\mathcal{T}$. Doing this prossecs we can constract an infinite chain of finitely generated torsion classes of $\modd \Lambda$. Then by Corollary \ref{T11} we have an infinite chain of widely generated torsion classes of $\modd \Lambda$ which gives a contradiction. Therefore $\Lambda$ is brick-finite and the result follows.
\end{proof}

\begin{corollary}\label{T13} Let $\Lambda$ be a finite-dimensional $K$-algebra. Then $\Lambda$ is brick-finite if and only if the following conditions are satisfied.
\begin{itemize}
\item[(1)] For any chain $\mathrm{T}(\mathcal{W}_1)\subseteq \mathrm{T}(\mathcal{W}_2)\subseteq \mathrm{T}(\mathcal{W}_3)\subseteq\cdots$ of widely generated torsion classes of $\modd \Lambda$, $\bigcup_{i=1}^{\infty}\mathrm{T}(\mathcal{W}_i)$ is a widely generated torsion class.
\item[(2)] Any semibrick in $\modd \Lambda$ is a finite set.
\end{itemize}
\end{corollary}	
\begin{proof} Since the conditions 1 and 2 imply that any chain of widely generated torsion classes becomes eventually constant, the result follows from Theorem \ref{T12}.
\end{proof}

A set $\mathcal{M}$ of isomorphism classes of bricks in $\modd \Lambda$ is called \textit{monobrick} if every morphism between elements of $\mathcal{M}$ is either zero or an injection in $\modd \Lambda$. Enomoto in \cite{E1} introduced and studied monobricks.

A monobrick $\mathcal{N}$ is called a \textit{cofinal extension} of a monobrick $\mathcal{M}$ if $\mathcal{M}\subset \mathcal{N}$ and for any $N\in \mathcal{N}$, there exist $M\in \mathcal{M}$ and an injection $N\rightarrow M$ in $\modd \Lambda$. A monobrick $\mathcal{M}$ is called
\textit{cofinally closed} if there is no proper cofinal extension of $\mathcal{M}$. The union of all cofinal extensions of a monobrick $\mathcal{M}$, denoted by $\overline{\mathcal{M}}$, is called a \textit{cofinal closure} of $\mathcal{M}$. By \cite[Corollary 3.4]{E1}, $\overline{\mathcal{M}}$ is the unique cofinal extension of $\mathcal{M}$. The set of cofinally closed monobricks in $\modd \Lambda$ is denoted by $\mbrickc\Lambda$. Obviously, every semibrick is a monobrick. By using Theorem \ref{T6}, we show that there is an injective map from $\sbrick \Lambda$ to $\mbrickc\Lambda$.

According to \cite[Theorem 3.15]{E1}, we have the following bijection between the set of torsion-free classes and the set of cofinally closed monobricks in $\modd \Lambda$. $$\begin{tikzpicture}[node distance=1.5cm]
    \node (A) {$\torf\Lambda$};
    \node[right=of A] (B) {$\mbrickc\Lambda$.};

    \draw[->, thick, black] ([yshift=1.5mm]A.east) -- node[above] {$\ssim$} ([yshift=1.5mm]B.west);
    \draw[->, thick,black] ([yshift=-1.5mm]B.west) -- node[below] {$\filt$} ([yshift=-1.5mm]A.east);
\end{tikzpicture}$$

Therefore, by Theorem \ref{T6}, we have the following injection from the set of semibricks to the set of cofinally closed monobricks in $\modd \Lambda$. $$\begin{tikzpicture}[hookarrow/.style={{Hooks[right]}->},
node distance=1.2cm]
 \node (A) {$\Phi:\sbrick \Lambda$};
    \node[right=of A] (B) {$\torf_w \Lambda$};
    \node[right=of B] (C) {$\torf \Lambda$};
    \node[right=of C] (D) {$\mbrickc\Lambda$.};
    \draw[->, thick, black](A) -- node[above] {$\mathrm{F}$} (B);
     \draw[hookarrow] (B)--(C);
    \draw[->, thick,black] (C) -- node[above] {$\ssim$} (D);
\end{tikzpicture}$$

\begin{corollary}\label{C3} Let $\Lambda$ be a finite-dimensional $K$-algebra. Then the following are equivalent.
\begin{itemize}
\item[(1)] $\Lambda$ is brick-finite.
\item[(2)] The map $\Phi:\sbrick \Lambda\longrightarrow \mbrickc\Lambda$ is bijective.
\item[(3)] Every cofinally closed monobrick is a cofinal closure of some semibrick.
\end{itemize}
\end{corollary}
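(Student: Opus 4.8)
The plan is to exploit the fact that $\Phi$ is by construction a composite of three maps,
$$\sbrick \Lambda \xrightarrow{\ \mathrm{F}\ } \torf_w \Lambda \hookrightarrow \torf \Lambda \xrightarrow{\ \ssim\ } \mbrickc\Lambda,$$
two of which are already bijections. Indeed, the second bijection of Theorem \ref{T6} tells us that $\mathrm{F}\colon \sbrick \Lambda \to \torf_w \Lambda$ is a bijection, and the bijection of \cite[Theorem 3.15]{E1} recalled just above tells us that $\ssim\colon \torf \Lambda \to \mbrickc\Lambda$ is a bijection. Consequently the only potential failure of surjectivity (and the injectivity is automatic, being a composite of injections) sits entirely in the middle arrow, the inclusion $\torf_w \Lambda \hookrightarrow \torf \Lambda$.

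With this factorization in hand, the key reduction is immediate: since $\mathrm{F}$ and $\ssim$ are bijections, the composite $\Phi$ is bijective if and only if the inclusion $\torf_w \Lambda \hookrightarrow \torf \Lambda$ is itself a bijection, that is, if and only if $\torf_w \Lambda = \torf \Lambda$. In words, $\Phi$ is bijective precisely when every torsion-free class in $\modd \Lambda$ is widely generated. I would therefore first verify $(1)\Rightarrow(2)$ by invoking the equivalence $(1)\Leftrightarrow(2)$ of Corollary \ref{C2}, which says that $\Lambda$ is brick-finite exactly when every torsion-free class is widely generated; under this hypothesis the inclusion is onto, so all three maps in the factorization are bijections and $\Phi$ is bijective. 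For the converse $(2)\Rightarrow(1)$, bijectivity of $\Phi$ forces the inclusion to be surjective, so again every torsion-free class is widely generated, and Corollary \ref{C2} returns brick-finiteness.

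I do not expect any genuine obstacle here, since all the substantive content has been packaged into the earlier bijections and into Corollary \ref{C2}; the one point to state carefully is that bijectivity of the outer maps lets us transport bijectivity of $\Phi$ cleanly onto the inclusion $\torf_w \Lambda \hookrightarrow \torf \Lambda$, with no loss of information. The proof is thus a short chain of equivalences:
$$\Phi \text{ bijective} \iff \torf_w \Lambda = \torf \Lambda \iff \Lambda \text{ is brick-finite},$$
where the first equivalence uses Theorem \ref{T6} and \cite[Theorem 3.15]{E1}, and the second is the content of Corollary \ref{C2}.
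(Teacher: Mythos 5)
Your proposal is correct and follows exactly the route the paper intends: the paper constructs $\Phi$ as the composite $\ssim \circ\, (\torf_w \Lambda \hookrightarrow \torf \Lambda) \circ \mathrm{F}$ and then deduces the corollary ``as a consequence of Corollary \ref{C2},'' which is precisely your reduction of bijectivity of $\Phi$ to the equality $\torf_w \Lambda = \torf \Lambda$ via the two outer bijections (Theorem \ref{T6} and \cite[Theorem 3.15]{E1}). Your write-up simply makes explicit the argument the paper leaves implicit.
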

\begin{proof} $($1$)$$\Leftrightarrow$$($2$)$ follows from the above argument and Corollary \ref{C2}.

$($2$)$$\Leftrightarrow$$($3$)$ follows from the fact that for every semibrick $\mathcal{S}$, by \cite[Proposition 3.13]{E1}, $\overline{\mathcal{S}}=\ssim(\mathrm{F}(\mathcal{S}))$. 
\end{proof}

	\section*{acknowledgements}
This work is based upon research funded by Iran National Science Foundation (INSF) under project No. 4032107. The author is grateful to S. Asai, R. Kase, C. Paquette and F. Sentieri for
helpful discussions and insightful remarks.


\begin{thebibliography}{10}
		
		
		\bibitem{AIR} \textsc{T. Adachi, O. Iyama and I. Reiten}, $\tau$-tilting theory, \emph{Compos. Math.}, \textbf{150} (2011), 415--452.
			
		\bibitem{AS} \textsc{L. Angeleri H{\"u}gel and F. Sentieri}, Wide coreflective subcategories and torsion pairs, \emph{J. Algebra}, \textbf{664} (2025), 164--205.
		
		\bibitem{A} \textsc{S. Asai}, Semibricks, \emph{Int. Math. Res. Not.}, \textbf{16} (2020), 4993--5054.
		
		\bibitem{AP} \textsc{S. Asai and C. Pfeifer}, Wide subcategories and lattices of torsion classes, \emph{ Algebr. Represent. Theory}, \textbf{25}(6) (2022), 1611--1629.

\bibitem{ASS} \textsc{I. Assem, D. Simson and A. Skowro\'{n}ski}, \emph{Elements of the Representation Theory of Associative Algebras, Vol. 1: Techniques of Representation Theory}, London Math. Soc. Student Texts \textbf{65}, Cambridge University Press, Cambridge, 2006.
				
		\bibitem{BCZ} \textsc{E. Barnard, A. Carroll, and S. Zhu}, Minimal inclusions of torsion classes, \emph{Algebr. Comb.}, \textbf{2}(5) (2019), 879--901.

		\bibitem{BH} \textsc{E. Barnard and E. J. Hanson}, Exceptional sequences in semidistributive lattices and the poset
topology of wide subcategories, \emph{J. Algebra Appl.}, \textbf{4}(25) (2026), 2550360.

\bibitem{BTZ} \textsc{E. Barnard, G. Todorov and S. Zhu}, Dynamical combinatorics and torsion classes, \emph{J. Pure Appl. Algebra}, 225(9, Paper No. 106642), 25 (2021).
		
		\bibitem{DIJ} \textsc{L. Demonet, O. Iyama, and G. Jasso}, $\tau$-tilting finite algebras, bricks, and g-vectors, \emph{Int. Math. Res. Not.}, \textbf{3} (2019), 852--892.
		
		\bibitem{DIRRH} \textsc{ L. Demonet, O. Iyama, N. Reading, I. Reiten, and H. Thomas}, Lattice theory of torsion classes: beyond $\tau$-tilting theory, \emph{Trans. Amer. Math. Soc. Ser. B} \textbf{10} (2023), 542--612.

	\bibitem{EN} \textsc{R. Ebrahimi and A. Nasr-Isfahani}, Semibrick-cosilting correspondence, \emph{Bull. London Math. Soc.}, \textbf{57}(7) (2025), 2018--2032.

\bibitem{E2} \textsc{H. Enomoto}, From the lattice of torsion classes to the posets of wide
subcategories and ICE-closed subcategories, \emph{Algebr. Represent. Theory}, \textbf{26} (2023), 3223--3253.

\bibitem{E1} \textsc{H. Enomoto}, Monobrick, a uniform approach to torsion-free classes and wide subcategories, \emph{Adv. Math.}, \textbf{393} (2021), 108113.

\bibitem{E3} \textsc{H. Enomoto}, Rigid modules and ICE-closed subcategories in quiver representations, \emph{J. Algebra}, \textbf{594} (2022), 364--388.
				
		\bibitem{IT} \textsc{C. Ingalls and H. Thomas}, Noncrossing partitions and representations of quivers, \emph{Compos. Math.}, \textbf{145}(6) (2009), 1533--1562.
		
		\bibitem{MS} \textsc{F. Marks and J. {\v{S}}tov{\'\i}{\v{c}}ek}, Torsion classes, wide subcategories and localisations, \emph{Bull. London Math. Soc.}, \textbf{49}(3) (2017), 405--416.
\bibitem{RST} \textsc{N. Reading, D.E. Speyer and H. Thomas}, The fundamental theorem of finite semidistributive lattices,
\emph{Selecta Math. (N.S.).} \textbf{27}(4, Paper No. 59), 53 (2021).	
\bibitem{R1} \textsc{C.M. Ringel}, Brick chain filtrations, arXiv:2411.18427 (2024).
\bibitem{R} \textsc{C.M. Ringel}, Representations of K-species and bimodules, \emph{J. Algebra} \textbf{41}(2) (1976) 269--302.
		\bibitem{S} \textsc{F. Sentieri}, A brick version of a theorem of Auslander, \emph{Nagoya Math. J.}, \textbf{249} (2023), 88--106.
		
	\end{thebibliography}
\end{document}